\newtheorem{thm}{Theorem}[section]
\newtheorem{prop}[thm]{Proposition}
\newtheorem{lemma}[thm]{Lemma}
\newtheorem{cor}[thm]{Corollary}
\newtheorem{conj}[thm]{Conjecture}
\theoremstyle{definition}
\newtheorem{defn}[thm]{Definition}
\theoremstyle{remark}
\newtheorem{rmk}[thm]{Remark}
\newcommand{\R}{\mathbb{R}}
\newcommand{\Z}{\mathbb{Z}}
\newcommand{\N}{\mathbb{N}}
\renewcommand{\P}{\mathcal{P}}
\newcommand{\F}{\mathcal{F}}
\newcommand{\be}{\begin{enumerate}}
\newcommand{\ee}{\end{enumerate}}
\begin{document}
\title[Supporting broken book decompositions for contact forms]{On the existence of supporting broken book decompositions for contact forms in dimension $3$}
\author{Vincent Colin}
\address{V. Colin, Nantes Universit\'e, CNRS, Laboratoire de Math\'ematiques Jean Leray, LMJL, F-44000 Nantes, France}
\email{vincent.colin@univ-nantes.fr}
\urladdr{https://www.math.sciences.univ-nantes.fr/~vcolin/}

\author{Pierre Dehornoy}
\address{P. Dehornoy, Univ.\,Grenoble Alpes, CNRS, Institut Fourier, F-38000 Grenoble, France}
\email{pierre.dehornoy@univ-grenoble-alpes.fr}
\urladdr{http://www-fourier.ujf-grenoble.fr/~dehornop/}

\author{Ana Rechtman}
\address{A. Rechtman, Institut de Recherche Math\'ematique Avanc\'ee,
Universit\'e de Strasbourg,
7 rue Ren\'e Descartes,
67084 Strasbourg, France}
\email{rechtman@math.unistra.fr}
\urladdr{https://irma.math.unistra.fr/~rechtman/}

\date{This version: \today}

\keywords{open book decomposition, broken book decomposition, Birkhoff section, Reeb vector field, entropy, periodic orbit}

\makeatletter
\@namedef{subjclassname@1991}{Mathematics Subject Classification (2020)}
\makeatother
\subjclass{53E50, 57K33, 37C27, 37C35}

\begin{abstract}
We prove that in dimension $3$ every nondegenerate contact form is carried by a broken book decomposition. As an application we obtain that on a closed $3$-manifold, every nondegenerate Reeb vector field has either two or infinitely many periodic orbits, and two periodic orbits are possible only on the tight sphere or on a tight lens space.
Moreover we get that if $M$ is a closed oriented $3$-manifold that is not a graph manifold, for example a hyperbolic manifold, then every nondegenerate Reeb vector field on $M$ has positive topological entropy. 

\end{abstract}

\maketitle

\tableofcontents

\section{Introduction}

On a closed $3$-manifold $M$, the {\it Giroux correspondence} asserts that every contact structure $\xi$ is carried by {\it some} open book decomposition of $M$: there exists a Reeb vector field for~$\xi$ transverse to the interior of the pages and tangent to the binding~\cite{Gi}. 
The dynamics of this specific Reeb vector field is then captured by its first-return map on a page, which is a flux-zero area-preserving diffeomorphism of a compact surface, a much simplified data.
When one is interested in the dynamics of a {\it given} Reeb vector field this Giroux correspondence is quite unsatisfactory---though there are ways to transfer some properties of an adapted Reeb vector field to every other one through contact homology techniques \cite{CH, ACH}---and the question one can ask is: Is every Reeb vector field carried by  some (rational) open book decomposition? 
Equivalently, does every Reeb vector field admit a Birkhoff section? 

We give here a positive answer for the generic class of nondegenerate Reeb vector fields and the extended class of {\it broken book decompositions} (see Definitions~\ref{defn-sections} -- \ref{defn-carry} for details).

\begin{thm}\label{thm: main}
On a closed 3-manifold, there is an open $C^1$-neighborhood of the set of nondegenerate Reeb vector fields  such that every Reeb vector field in this neighborhood is $\partial$-strongly carried by a broken book decomposition.
\end{thm}

A contact form and the corresponding Reeb vector field are {\it nondegenerate} if all the periodic orbits of the Reeb vector field are nondegenerate, namely the eigenvalues of the differentials of the return maps on small discs tranverse to the periodic orbits are all different from one (even when the orbit is travelled several times). 
The nondegeneracy condition is generic for Reeb vector fields~\cite[Lemma~7.1]{CH}. 

For a vector field $R$ on a 3-manifold, a {\it $R$-section} is a surface with boundary whose interior is embedded and transverse to~$R$ and whose boundary is immersed and composed of periodic orbits (it is sometimes called a {partial section for~$R$}). 
A $R$-section is {\it $\partial$-strong} when the linearized flow of $R$ along every boundary orbit is transversal to the boundary of the surface.

A {\it Birkhoff section} for~$R$ is a $R$-section that must also intersect all orbits of $R$ within bounded time, so that there is a well-defined first-return map in the interior of the surface. 
These surfaces are also known as rational global surfaces of section. 
A given Birkhoff section induces a rational open book decomposition of the manifold (where we do not require the usual condition of compatibility of orientations along the binding, see Section~\ref{ssec-contact}), all of whose pages are also Birkhoff sections. 
For this reason we refer to the boundary of a Birkhoff section as the {\it binding}. 

Broken book decompositions are generalisations of Birkhoff sections and rational open book decompositions, reminiscent of finite energy foliations constructed by Hofer, Wyszocki and Zehnder for nondegenerate Reeb vector fields on $\mathbb{S}^3$~\cite{HWZ}. 
In a broken book decomposition we allow the binding to have {\it broken} components, in addition to {\it radial} ones modelled on the classical open book case. 
The complement of the binding is foliated by surfaces that are relatively compact in $M$ and whose closures in $M$ are called the {\it pages}. 
A radial component of the binding has a tubular neighborhood in which the pages of the broken book induce a radial foliation by annuli (as in Figure~\ref{figure: radialcomponent}). 
The foliation in a tubular neighborhood of a broken component has sectors that are radially foliated by annuli and sectors that are foliated by surfaces that transversally look like hyperbolas (see Figures~\ref{figure: brokencomponent0} and~\ref{figure: brokencomponent1}). 
For the broken books we construct in this paper, each broken component has either two or four sectors of each type, depending on whether the component is of positive or negative hyperbolic type. 

A broken book decomposition {\it carries}---or {\it supports}---a vector field~$R$ if the binding is composed of periodic orbits, while the other orbits are transverse to the interior of the pages. 
In particular the pages are $R$-sections whose boundary is contained in the binding and whose interiors give in general a non-trivial foliation of the complement of the binding, as opposed to the genuine open book case.
The broken book {\it $\partial$-strongly carries} the vector field~$R$ if moreover the pages are $\partial$-strong $R$-sections. 

\medskip 

In the proof of Theorem \ref{thm: main}, we construct a supporting broken book decomposition for any fixed nondegenerate Reeb vector field on a $3$-manifold~$M$ from a cover of $M$ by pseudo-holomorphic curves, given by the non-triviality of the $U$-map in embedded contact homology. 
Using a construction of Fried that glues and resolves such curves~\cite{Fr}, the projected pseudo-holomorphic curves are converted into $R$-sections. 
We can then extract from this collection a finite complete system of disjoint $R$-sections to the Reeb vector field, meaning that their union intersects every orbit. 
We then complete the broken book decomposition in the complement of the chosen $R$-sections. 
The novelty in our approach is to combine pseudo-holomorphic curves with Fried's techniques. 

 It is worth noticing that the binding of the broken book produced by Theorem \ref{thm: main} for a contact form $\lambda$ can be taken to have total action less than or equal to the {\it spectrum} of any class $\sigma \in ECH(M,\lambda)$ with $U(\sigma)\neq 0$, see Section \ref{sec: ECH} for a definition. 

We believe that the notion of a (degenerate) broken book decomposition is interesting in its own right. Near the binding, the broken book foliation looks like the mapping torus of a transverse foliation of Le Calvez~\cite{LeC} and our study could also be seen as a first step towards generalising Le Calvez' theory to vector fields in three dimensions.

\medskip
 
Birkhoff sections are often useful for understanding the dynamics of a given vector field. 
We hope that broken book decompositions will also help answering dynamical questions. 
In this direction, we give two applications of Theorem~\ref{thm: main}.

Weinstein conjectured in 1979 that a Reeb vector field on a closed \linebreak 3-manifold always has at least one periodic orbit \cite{Wei79}. The conjecture was proved in full generality by Taubes using Seiberg-Witten Floer homology \cite{Tau}. 
It is also a consequence of the $U$-map property we use here, and it is no surprise that our result indeed implies the existence of the binding periodic orbits. Taubes' result was then improved by Cristofaro-Gardiner and Hutchings \cite{C-GH} who proved that every Reeb vector field on a closed 3-manifold has at least two periodic orbits, following a work of Ginzburg, Hein, Hryniewicz and Macarini on $\mathbb{S}^3$~\cite{GHHM}. 
It is now moreover conjectured that a Reeb vector field has either two or infinitely many periodic orbits. 
The existence of infinitely many periodic orbits has been established under some hypothesis (see the survey \cite{GG}) and Irie proved that it is generic~\cite{I}. Here we extend a recent result of Cristofaro-Gardiner, Hutchings and Pomerleano, originally obtained for {\it torsion contact structures} $\xi$ (that is, satisfying $c_1(\xi) \in \mathrm{Tor}(H^2(M,\Z))$)~\cite{CHP} and prove the conjecture for an open neighborhood of nondegenerate Reeb vector fields.

\begin{thm}\label{thm: infinite} If $M$ is a closed oriented $3$-manifold that is not the sphere or a lens space, then there is an open $C^1$-neighborhood of the set of 
nondegenerate Reeb vector fields on~$M$ such that every Reeb vector field in this neighborhood has infinitely many simple periodic orbits. 
In the case of the sphere or a lens space, there is an open $C^1$-neighborhood of the set of nondegenerate Reeb vector fields such that every Reeb vector field in this neighborhood has either two or infinitely many periodic orbits. 
\end{thm}

We point out that the cases where Reeb vector fields have exactly two nondegenerate periodic orbits are well-understood: they exist only on the sphere or on lens spaces, both periodic orbits are elliptic and are the core circles of a genus one Heegaard splitting of the manifold~\cite{HT}. In these cases, the contact structure has to be tight, since a nondegenerate Reeb vector field of an overtwisted contact structure always has a hyperbolic periodic orbit, see for example~\cite[Theorem~8.9]{HK}. 

Beyond the number of periodic orbits, the study of the topological entropy of Reeb vector fields started with the work of Macarini and Schlenk \cite{MS} and has been continued by Alves \cite{ACH,A1}. 
We recall that topological entropy measures the complexity of a flow by computing the growth of the number of ``different'' orbits. 
For flows in dimension 3, if the topological entropy is positive then the number of periodic orbits of period less than a positive number~$L$ grows exponentially with~$L$. 
As another application of Theorem~\ref{thm: main} we get 

\begin{thm}\label{thm: entropy} If $M$ is a closed oriented $3$-manifold that is not a graph manifold, then there is an open $C^1$-neighborhood of the set of 
nondegenerate Reeb vector fields on~$M$ such that every Reeb vector field in this neighborhood has positive topological entropy.
\end{thm}

Theorems~\ref{thm: infinite} and~\ref{thm: entropy} are obtained by analysing the broken binding components of the broken book decomposition. 
Indeed, in the nondegenerate context, a broken component of the binding has to be a hyperbolic periodic orbit. 
As such it has stable and unstable manifolds~\cite{Sm}. 
We can prove that there are heteroclinic cycles between these periodic orbits. Here a {\it heteroclinic cycle} is a finite sequence of orbits of the flow $\gamma_0,\dots,\gamma_n=\gamma_0$ such that every $\gamma_i$ is forward and backward asymptotic to a hyperbolic periodic orbit and the forward limit orbit of $\gamma_i$ is equal to the backward limit of $\gamma_{i+1}$, for $i=0,\dots n{-}1$. 
A {\it homoclinic orbit} is a heteroclinic cycle with $n=0$: an orbit that is forward and backward asymptotic to a hyperbolic periodic orbit. 
If there are no broken components, then we have a rational open book decomposition and Theorems~\ref{thm: infinite} and~\ref{thm: entropy} are deduced from an analysis of its monodromy.
In particular, we obtain

\begin{thm}\label{thm: homoclinic} If $M$ is a closed oriented $3$-manifold, there is an open $C^1$-neighborhood of the set of strongly nondegenerate Reeb vector fields on~$M$ without homoclinic orbits such that every Reeb vector field in this neighborhood is carried by a rational open book decomposition, or, equivalently has a Birkhoff section.
\end{thm}

A vector field is {\it strongly nondegenerate} if it is nondegenerate and the intersections of the stable and unstable manifolds of the hyperbolic orbits are transverse. By a result of Katok, a strongly nondegenerate vector field with a homoclinic orbit has positive topological entropy~\cite{Katok}, thus Theorem \ref{thm: homoclinic} implies that a strongly nondegenerate Reeb vector field whose topological entropy is zero is carried by a rational open book decomposition.

\medskip

Our techniques, combined with Fried's construction~\cite{Fr}, also allow to establish the existence of a carrying rational open book decomposition when there is only one broken component in the binding. 
Supported by these constructions, we make the optimistic Conjecture~\ref{conj-obd} that broken book decompositions can be transformed into rational open book decompositions, and thus that nondegenerate Reeb vector fields always admit Birkhoff sections.

\medskip

The results presented in this introduction are first proved for nondegenerate Reeb vector fields and extended to $C^1$-neighborhoods at the end of the paper. 
In Section~\ref{sec-bob} we define broken book decompositions and how they carry vector fields or contact forms. 
The existence of broken book decompositions is established in Section~\ref{sec-ech}, in particular we give a proof of Theorem~\ref{thm: main} for nondegenerate Reeb vector fields. 
The applications of this theorem are discussed in Section~\ref{sec-app}, which is divided in seven parts. 
After two sections containing preliminaries, in Section~\ref{ssec-int} we study the invariant stable and unstable manifolds of the broken components of the binding. 
In particular we prove that they have to intersect. 
Then we give a proof of Theorem~\ref{thm: infinite} for strongly nondegenerate Reeb vector fields in order to explain the main ideas in Section~\ref{ssec-strongly}. 
Passing from strongly nondegenerate to nondegenerate Reeb vector fields in the proof of Theorem~\ref{thm: infinite} is rather technical, so this proof is in Section~\ref{ssec-nondeg}, which also contains the proof of Theorem~\ref{thm: entropy}.
In Section~\ref{ssec-other} we prove other applications of the existence of broken book decompositions, in particular Theorem~\ref{thm: homoclinic} for nondegenerate Reeb vector fields and explain a construction that can be applied to get rid of broken components of the binding. 
Finally, in Section~\ref{ssec-improving}, we extend the proofs of Theorems~\ref{thm: main}, \ref{thm: infinite}, \ref{thm: entropy} and~\ref{thm: homoclinic} to an open neighborhood of the set of nondenegerate Reeb vector fields in the $C^1$-topology.

\medskip
\noindent{\bf Acknowledgements:} We thank Oliver Edtmair, Umberto Hryniewicz, Michael Hutchings, Rohil Prasad and the anonymous referees for useful exchanges and suggestions. 
 
V. Colin thanks the ANR Quantact for its support, P. Dehornoy thanks the ANR projects IdEx UGA and Gromeov for their supports, and A. Rechtman thanks the IdEx Unistra, Investments for the future program of the French Government. This project started during the Matrix event ``Dynamics, Foliations and Geometry in dimension $3$" held at Monash University in 2018. We thank these institutions for their support.

\section{Reeb vector fields and broken book decompositions}\label{sec-bob}

This section contains the definitions underlying Theorem~\ref{thm: main}.

\subsection{Sections and $\partial$-strong sections}\label{ssec-sections}

For a smooth vector field $R$ on a closed 3-manifold~$M$, we denote by $(\phi_R^t)_{t\in\R}$ its flow. 

\begin{defn}\label{defn-sections}
A {\it $R$-section} is an immersed surface $S$ in $M$ whose interior is embedded and transverse to $R$ and whose boundary covers periodic orbits of $R$.
A $R$-section $S$ is a {\it Birkhoff section} if it intersects all the orbits of~$R$ in bounded time: there exists $T>0$ such that for all $x\in M$ the orbit segment~$\phi_R^{[0,T]}(x)$ intersects $S$.
\end{defn}

Given a periodic orbit $\gamma$ of $R$ we denote by $\Sigma_\gamma$ the unit normal bundle~$(TM_\gamma/T\gamma)/\R_+$ to~$\gamma$ and by~$M_\gamma$ the normal blow-up of~$M$ along~$\gamma$, that is the manifold~$(M\setminus\gamma)\cup \Sigma_\gamma$.
The vector field~$R$ being smooth, it extends to a vector field~$R_\gamma$ on the torus~$\Sigma_\gamma$---and tangent to it---, and hence to a vector field on~$M_\gamma$ and tangent to its boundary. 
We abuse notation and still denote this extension by~$R$.
If $S$ is a $R$-section with~$\gamma \in \partial S$, we denote by~$\partial_\gamma S$ is extension to~$\Sigma_\gamma$ (see Figure~\ref{figure: dstrong}).

\begin{defn}\label{defn-strong}
A $R$-section $S$ is {\it $\partial$-strong} if, for every boundary orbit $\gamma$ of~$S$, the extension~$\partial_\gamma S$ is a collection of embedded curves in~$\Sigma_\gamma$ that are transverse to the extended vector field~$R$.
If $S$ is a Birkhoff section, $S$ is {\it $\partial$-strong} if moreover $\partial_\gamma S$ defines a transverse section for~$R$ on~$\Sigma_\gamma$.
\end{defn}

\begin{figure}[ht]
\centering
\includegraphics[width=.5\textwidth]{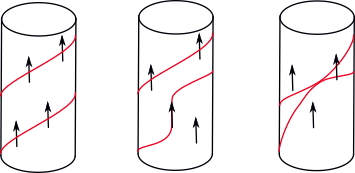}
\caption{The unit normal bundle~$\Sigma_\gamma$ to a periodic orbit~$\gamma$ of a vector field and the extension~$\partial_\gamma S$ of a $R$-section~$S$ to it~(in red). On the left $S$ is $\partial$-strong, while it is not in the center ($\partial_\gamma S$ is not transverse to $R$ on $\Sigma_\gamma$) nor on the right ($\partial_\gamma S$ is not embedded in~$\Sigma_\gamma$).}\label{figure: dstrong}
\end{figure}

\subsection{Contact forms, Reeb vector fields and open books}\label{ssec-contact}

Recall that on a 3-manifold, a {\it contact form} is a non-vanishing 1-form~$\lambda$ such that $\lambda\wedge d\lambda$ is a volume form. 
Contact forms exist on every orientable 3-manifold. 
The {\it Reeb vector field} associated to a contact form~$\lambda$ is the unique vector field~$R_\lambda$ satisfying
\[ i_{R_\lambda}\lambda=1\mathrm{\quad and\quad} i_{R_\lambda}d\lambda=0.\]
The vector field~$R_\lambda$ does not vanish and it preserves the volume~$\lambda\wedge d\lambda$. 
It is also transverse to the so-called \emph{contact structure} $\ker \lambda$. 

Recall that a {\it rational open book decomposition} of a closed 3-manifold~$M$ is a pair $(K,\F)$ where $K$ is an oriented link called the {\it binding} of the open book and $\F$ is a foliation of $M\setminus K$ defined by a fibration of $M\setminus K$ over~$\mathbb{S}^1$. Near every component $k$ of $K$ the foliation is as in Figure \ref{figure: radialcomponent}: in local cylindrical coordinates~$(r, \theta, z)$ the leaves are given by $\theta=$ cst.
A {\it page} of the open book is the closure of a leaf of $\F$ which is obtained by its union with $K$. 
In this context the pages are embedded in their interior, but only immersed along their boundary.
The adjective {\it rational} is dropped when moreover the pages are embedded along their boundary. 
So in an {\it open book decomposition} each page appears exactly once along each component of the binding. 
In both cases we say that $k$ is radial with respect to $\F$.

\begin{figure}[ht]
\centering
\includegraphics[width=.4\textwidth]{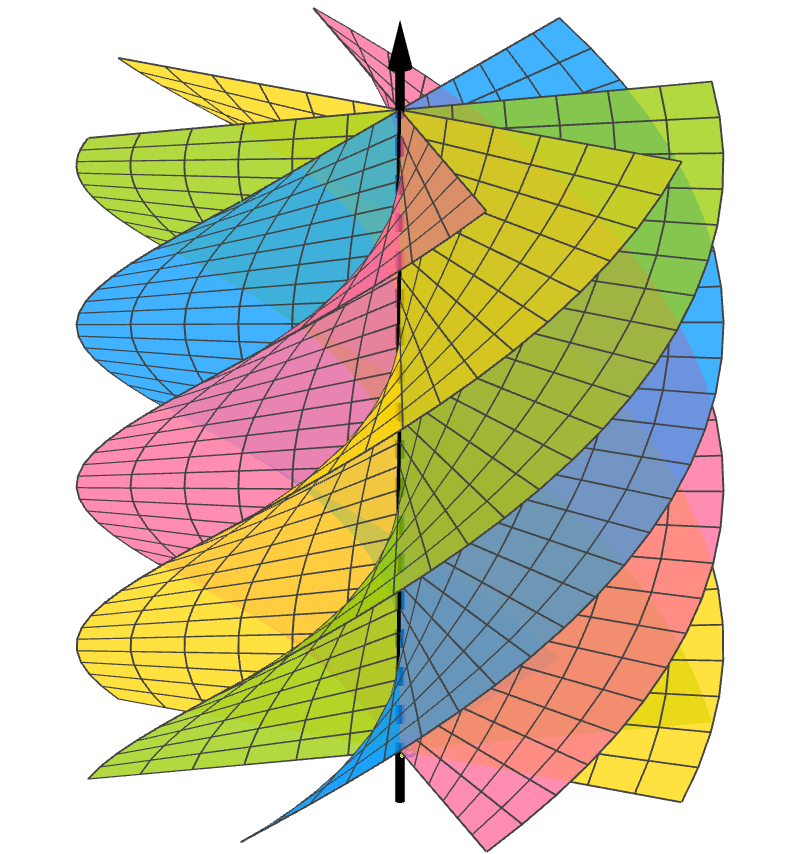}
\caption{A binding component of a rational open book where every page wraps twice along the binding component. The same picture appears in a neighborhood of a radial binding component of a broken book.}
\label{figure: radialcomponent}
\end{figure}

Given a contact form~$\lambda$ on~$M$, a rational open book decomposition~$(K, \F)$ of~$M$ {\it carries}~$\lambda$ if its Reeb vector field~$R_\lambda$ is tangent to the binding~$K$ and positively transverse to the interior of the pages.
Compared with the standard definition of Giroux \cite{Gi}, we de not assume here that the binding is positively tangent to the Reeb vector field with respect to the orientation induced by the pages.

In the previous context, any page~$S$ of the open book decomposition is a Birkhoff section for~$R_\lambda$ and the dynamics of~$R_\lambda$ may be studied through the first-return map~$f_S:S\to S$ induced by~$R_\lambda$. 
In particular, $f_S$ preserves the area-form $d\lambda|_{TS}$ in the interior of $S$. 
Moreover, $f_S$ is a {\it flux-zero} homeomorphism as in Definition~\ref{defn: fluxzero}.

\subsection{Degenerate broken book decompositions}\label{sec: degenerate}
We generalise open book decompositions
 by allowing another behaviour near the binding, namely {\it broken} components.
 While similar, the definition is more general than the one of {transverse foliations} proposed by Hryniewicz and Salam\~ao~\cite{HryS} and than the one of finite energy foliations of Hofer, Wysocki and Zehnder \cite{HWZ}. 
Although we mostly consider (nondegenerate) broken book decompositions, we start with a general definition. 

\begin{defn}\label{defn-degbob}
A {\it degenerate broken book decomposition} of a closed\linebreak $3$-manifold $M$ is a pair $(K,\mathcal{F})$ such that:
\begin{itemize}
\item $K$ is a link, called the {\it binding}; 
\item $\mathcal{F}$ is a cooriented foliation of $M\setminus K$ such that each leaf $S$ of $\mathcal{F}$ is properly embedded in $M\setminus K$ and admits a compactification $\overline{S}$ in~$M$ which is a compact surface, called a {\it page}, whose boundary is contained in $K$; {\it radial} and {\it broken} binding components respectively; a component $k_ r$ of $K$ is {radial} if~$\F$ foliates a neighborhood of $k_r$ by annuli all having exactly one boundary component on $k_r$. 
\end{itemize}
\end{defn}
 For a degenerate broken book, the pages are not assumed to be transversally smooth along the binding. 
 
In a neighborhood of a radial binding component, a degenerate broken book is similar to a rational open book. 
If~$K_b$ is empty the degenerate broken book decomposition is a rational open book decomposition. 

For $k_b$ a broken binding component and $U$ a tubular neighborhood of~$k_b$, there are two types of leaves of 
$\F\cap U$: either the boundary contains~$k_b$, or it does not. 
In the first case we speak of a {\it radial leaf}, and in the second of a {\it hyperbolic leaf}. 
Containing~$k_b$ in the boundary being a closed condition, the set of radial leaves is closed. 
The local picture is then the following: around~$k_b$ alternate {\it radial sectors} where the foliation is made of radial leaves and {\it hyperbolic sectors} where the foliation looks transversally (that is, when we intersect it with a disc transverse to~$k_b$) like hyperbolas, see Figure~\ref{figure: brokencomponent0}. Observe that the definition of radial and hyperbolic leaves applies only for the intersection of a leaf with a neighborhood of $k_b$.

\begin{figure}[ht]
\centering
\includegraphics[width=.3\textwidth]{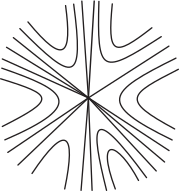}
\caption{A transversal view of a degenerate broken book decomposition near a broken binding component. There are six radial sectors that alternate with six hyperbolic sectors. Note that the radial sector at 2 o'clock consists of one leaf only.
 }\label{figure: brokencomponent0}
\end{figure}

\begin{defn}\label{defn-degcarry}
A vector field~$R$ is {\it carried} by a degenerate broken book decomposition $(K,\F)$ if it is tangent to $K$ and positively transverse to the leaves of $\F$.
A contact form $\lambda$ is {\it carried} by a degenerate broken book decomposition~$(K,\F)$ if its Reeb vector field $R_\lambda$ is carried by~$(K, \F)$. 
\end{defn}

By definition, the pages of a degenerate broken book decomposition carrying a contact form $\lambda$ are $R_\lambda$-sections. 
Thus, roughly speaking, a degenerate broken book decomposition carrying a Reeb vector field can be understood as a foliation by $R_\lambda$-sections, whereas a rational open book is a foliation by Birkhoff sections. 

If a degenerate broken book decomposition $(K, \F)$ carries a contact form~$\lambda$, its binding~$K$ is nonempty, for otherwise the vector field~$R_\lambda$ would be transverse to a compact surface with empty boundary, in contradiction with Stokes' theorem. 

Note that, as with rational open books in the previous section, we do not require that the orientation of the binding coming from the cooriented pages coincides with the orientation given by $R_\lambda$, as it is often required in the classical open book case. 
Actually, there is not even a preferred orientation at a broken binding component since different pages coming to the binding give different orientations depending on the sector they locally stand in. 

\subsection{Broken book decompositions}
Since the vector fields we consider in this article are nondegenerate, we can be a bit more restrictive concerning the broken books we consider. 
First the pages are assumed to be smooth. 
Next, recall that the periodic orbits of a nondegenerate Reeb vector field~$R_\lambda$ are of three types: elliptic, positive hyperbolic and negative hyperbolic, depending on whether the linearized first-return map on a small disc transverse to the periodic orbit is conjugated to an irrational rotation, has real positive eigenvalues, or has real negative eigenvalues.
If a degenerate broken book~$(K, \F)$ supports a nondegenerate contact form $\lambda$, a radial component of~$K$ (that is, one in~$K_r$) can be an elliptic or a hyperbolic periodic orbit of~$R_\lambda$; while a broken component of $K$ (one in~$K_b$) is necessarily a hyperbolic periodic orbit of $R_\lambda$. 
Indeed, if there was an elliptic orbit in~$K_b$, the vector field~$R_\lambda$ would fail to be transverse to~$\F$ in the sectors where the leaves of~$\F$ are hyperbolic-like.

Moreover, near each point of a broken component of the binding, the foliation has locally four hyperbolic sectors, separated by four radial sectors (as in Figure~\ref{figure: brokencomponent1}). 
This is due to the fact that the hyperbolic sectors cannot be transverse to the vector field if they contain no eigendirection of the broken orbit. 
Thus there cannot be more such sectors than eigendirections, and a given sector may not contain more than one eigendirection. 

\begin{defn}\label{defn-bob} 
A {\it broken book decomposition} (or \emph{broken book} for short) is a degenerate broken book decomposition whose pages are smooth and whose broken binding components locally have four hyperbolic sectors, separated by four radial sectors.
\end{defn}

Since a broken component of the binding is a hyperbolic periodic orbit, it can be positive or negative. If positive, the monodromy of~$\F$ along this orbit is the identity; and if negative the monodromy of $\mathcal{F}$ is a $\pi$-rotation, implying that the four local sectors correspond to two global sectors.

\begin{figure}[ht]
\centering
\includegraphics[width=.27\textwidth]{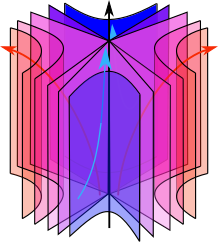}
\hspace{3mm}
\includegraphics[width=.3\textwidth]{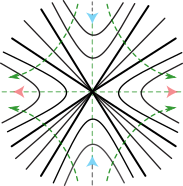}
\hspace{5mm}
\includegraphics[width=.3\textwidth]{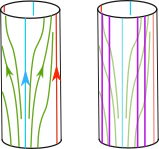}
\caption{A broken book decomposition $\partial$-strongly carrying a nondegenerate contact form~$\lambda$ in a neighborhood of a broken binding component. 
On the left the binding component (black) is shown, as well as some local pages of the broken book. 
Two unstable orbits (red) of the Reeb vector field~$R_\lambda$ and two stable orbits (blue) are also shown. 
In the center a transversal view, that is the intersection of the broken book with a disc transverse to~$R_\lambda$. 
The rigid pages are bolded. 
Some orbits of~$R_\lambda$ are represented with green dotted lines. 
In particular the four local stable/unstable manifolds lie in four different hyperbolic sectors.
On the right two pictures of the blow-up of the binding component with the extension of~$R_\lambda$ (first picture) and with the extension of the rigid pages (purple, right-most picture). 
The extensions of the pages are transverse to the extended vector field, hence the term $\partial$-strong.
 }\label{figure: brokencomponent1}
\end{figure}

\medskip 

The definition of a contact form (or a vector field) carried by a degenerate broken book extends to the nondegenerate case. 
However it is convenient to enforce some transversality on the binding components.

\begin{defn}\label{defn-carry}
A nondegenerate vector field~$R$ is {\it $\partial$-strongly carried} by a broken book decomposition~$(K, \F)$ if~$R$ is tangent to~$K$ and positively transverse to the pages of~$\F$, and if every page of~$\F$ is a $\partial$-strong $R$-section. 
A nondenegerate contact form~$\lambda$ is {\it $\partial$-strongly carried} by a broken book decomposition~$(K, \F)$ if its Reeb vector field~$R_\lambda$ is $\partial$-strongly carried by~$(K, \F)$.  
\end{defn}

 \medskip

We end this section with some general properties of broken book decompositions. 
When the broken binding is nonempty, we can distinguish different types of pages as follows. 

\begin{defn}\label{defn-rigid} A page $S$ of a broken book $(K, \F)$ that belongs to the interior of a 1-parameter family of pages of the form~$S\times[0,1]/\sim$, where $(y,t)\sim (y,t')$ for every $y\in \partial S$ and $t,t'\in [0,1]$, is called {\it regular}. On the other hand, a page that is not in the interior of such a 1-parameter family is called {\it rigid}. 
\end{defn}

A rigid leave must have at least one boundary component in the broken binding near which it  locally coincides with an annulus that separates a radial sector from a 
hyperbolic sector. Since there are finitely many such locally separating annuli, we get that there are only finitely many rigid leaves.

In the center of Figure~\ref{figure: brokencomponent1} is depicted a neighborhood of a broken binding component where the rigid leaves are bolded: these are the leaves that are the limit of a 1-parameter family of radial leaves. 
On the other side, the hyperbolic leaves break on them. 

For $(K, \F)$ a broken book decomposition with nonempty broken binding, in the complement of the set $\mathcal{R}$ of rigid pages the foliation 
$\mathcal{F}$  is a fibration over $\R$.

\medskip 

At this point, it is not clear to us whether a broken book decomposition supporting a contact form always has nonempty radial binding $K_r$.
We can prove it is the case when every regular page is either a disk or an annulus; this result will be used in the proof of Lemma~\ref{lemma: full}.

\begin{lemma}\label{lemma: radial nonempty}
A broken book decomposition supporting a contact form $\lambda$ and whose regular pages are all disks or annuli has nonempty radial binding.
\end{lemma}
\begin{proof} We argue by contradiction and assume that the radial binding is empty.
We start from a regular page $ P_1$, which is a disk or an annulus, having one boundary component on a broken binding component $h_1$.
The circle $h_1$ is a hyperbolic orbit for the Reeb flow $R_\lambda$ and its stable and unstable manifolds decompose a small neighborhood of $h_1$ into sectors. 
We consider a regular page $P_2$ that leaves $h_1$ in a sector adjacent to the one containing $P_1$. 
There are two possibilities: if $h_1$ is positive, then we have two choices for the sector and we pick $P_2$ in any of them and glue $P_2$ to $P_1$ along $h_1$; if $h_1$ is negative, there is only one choice of adjacent sector for $P_2$, but there are two ways to glue $P_2$ to $P_1$ and we pick any of them.
If $P_1$ and $P_2$ were both disks, we would get after gluing an immersed sphere (with corner along~$h_1$) that is positively transverse to the Reeb flow away from $h_1$, and obtain a contradiction using Stokes' theorem.
If one of them is an annulus, say $P_2$, we can consider its other boundary component, going to a binding component~$h_2$.
If $h_2=h_1$ and if $P_2$ arrives and leaves $h_1$ in two adjacent sectors then it gives an immersed torus positively transverse to the Reeb flow (except along $h_1$), again a contradiction.
If otherwise, we take a regular surface $P_3$ in the adjacent sector to the one locally containing $P_2$ near $h_2$.

Iterating this process, since there are only finitely many binding components and sectors, we have to either end with disks on both ends and find an immersed sphere positively transverse to $R_\lambda$ away from the binding components, or a cycle of surfaces $P_i,\dots,P_{i+k}$ forming an immersed torus after gluing and still positively transverse to $R_\lambda$ away from the binding components: in both cases a contradiction.
\end{proof}


\section{Construction of a broken book decomposition from ECH theory}\label{sec-ech}

In this section we give a proof of Theorem~\ref{thm: main} in the case of nondegenerate Reeb vector fields: for every nondegenerate contact form $\lambda$ on a closed oriented 3-manifold~$M$, we build a broken book decomposition that carries~$\lambda$. 

We first use embedded contact homology, in particular the\linebreak $U$-map, to obtain for every point $z\in M$ a pseudo-holomorphic curve in the symplectization $\mathbb{R}\times M$ such that the closure of its projection to $M$ contains $z$ (Lemma~\ref{lemma: main}). We then convert each of these projected curves into a $\partial$-strong $R_\lambda$-section (Corollary~\ref{corollary: embedded}). 
We then prove that there is a finite collection of $\partial$-strong $R_\lambda$-sections, with disjoint interiors, whose union intersects every orbit and such that each connected component of the complement fibers over~$\R$ (Lemma~\ref{lemma: rlsection}). 
This allows us to extend the finite collection of $\partial$-strong $R_\lambda$-sections to a foliation in the complement of a link formed by periodic orbits, thus completing a broken book decomposition. 
 
\subsection{Pseudo-holomorphic curves}
\label{sec: holomorphic}
 
In this section, we recall some facts concerning pseudo-holomorphic curves in symplectizations. 
We refer to Wendl's notes for a detailed introduction~\cite{We}. 

Fix a closed oriented 3-manifold~$M$ and a contact form $\lambda$ on~$M$ whose associated Reeb vector field~$R_\lambda$ is nondegenerate.
The symplectization is the 4-manifold~$\R\times M$, equipped with the symplectic form $d(e^s\lambda)$, where~$s$ denotes the additional real parameter. 
In this context a {\it pseudo-holomorphic curve} is a smooth map $u=F\to \R\times M$ from a Riemann surface $(F, j)$ satisfying $Tu\circ j=J\circ Tu$, where $J$ is an almost-complex structure compatible with the symplectic structure. 

We only consider pseudo-holomorphic curves that are {\it asymptotically cylindrical}. 
This means that $F$ is a closed compact surface with finitely many punctures which are of two types, called positive and negative. 
At every positive ({\it resp.} negative) puncture~$p\in F$, the curve $u$ is {\it asymptotic} to a closed orbit of~$R_\lambda$: 
this means that for some choice of local cylindrical coordinates~$(s,t)\in\R_+\times\mathbb{S}^1$ around~$p$ and for some periodic orbit~$\gamma$ of~$R_\lambda$, the surface $u(F)$ tends to $\{+\infty\}\times\gamma$ ({\it resp.} $\{-\infty\}\times\gamma$) in~$\R\times M$ as~ $s\to+\infty$.
In this situation, one says that~$\gamma$ is a {\it positive ({\it resp.} negative) end}, and that~$F$ {\it covers}~$\gamma$.
Composing with the projection $\pi:\R\times M\to M$, we see that $\pi\circ u(F)$ is a (possibly singular) surface in~$M$ bounded by some periodic orbits of~$R_\lambda$. 
More precisely, by the nondegeneracy assumption and a theorem of Hofer, Wysocki and Zehnder~\cite[Thm~2.8]{HWZ1} (see also~\cite[Thm~2.2]{Sie2}, \cite[Prop 3.2]{HT2} and~\cite[Thm 3.11]{We}), in some local cylindrical coordinates~$(r, \theta, z)$ adapted to~$R_\lambda$ around an end~$\gamma$, the projected curve~$\pi\circ u(F)$ is exponentially close to a {\it half-helix} of the form $\theta=f(z)$. 
 The direction $\{ \theta =f(z)\}$ is given by an eigenfunction of a nonzero eigenvalue of the {\it asymptotic operator} associated with $\gamma$, see \cite[Definition~3.5 and Theorem~3.11]{We}. 
 In particular, the asymptotic behaviour implies that the half-helix is $\partial$-strong along $\gamma$ (see Definition~\ref{defn-strong}).

The local picture of~$\pi\circ u(F)$ around that given Reeb periodic orbit is similar to one page of the open book decomposition shown in Figure~\ref{figure: radialcomponent}.
Note that near an elliptic periodic orbit, there is a well-defined germ of the bounded-time first-return map of the Reeb flow on the corresponding cylindrical ends.

The degree of the covering $\mathbb{S}^1\to\gamma$ induced by $t\mapsto \pi\circ\lim_{s\to+\infty}u(s,t)$ is called the {\it local multiplicity} at~$p$. 
It turns out to be positive at positive ends and negative at negative ends. 
Note that, after compactifying every puncture of~$F$ with a circle, several boundary components of~$F$ may be mapped onto the same periodic orbit of~$R_\lambda$. 
For a given periodic orbit of~$R_\lambda$, the sum of the local multiplicities of all positive ({\it resp.} negative) boundary components that are mapped on it is called the {\it global positive ({\it resp.} negative) multiplicity} of this orbit.

\subsection{Pseudo-holomorphic curves from embedded contact homology}\label{sec: ECH}
We now gather the results from embedded contact homology used in the proof of Theorem~\ref{thm: main}. 
We refer to~\cite{Hu} and~\cite{CHP} for a more detailed introduction to embedded contact homology.
The context is unchanged: $M$ is an oriented 3-manifold, $\lambda$ is a nondegenerate contact form, and~$R_\lambda$ is its Reeb vector~field. 

The ECH-chain complex~$ECC(M,\lambda)$ is generated over~$\Z_2$ (or $\Z$) by finite sets of simple periodic orbits of~$R_\lambda$ together with multiplicities. 
 
An ECH-holomophic curve between orbit sets $\Gamma$ and $\Gamma'$ is an asymptotically cylindrical pseudo-holomorphic curve such that $\Gamma$ is the limit (with multiplicities) of the positive punctures and $\Gamma'$ is the limit of the negative punctures.
The ECH-index of an ECH-holomorphic curve is an integer, whose definition can be found in Hutchings' notes~\cite[Def 3.5]{Hu}. 
The ECH-index-$1$ curves provide the differential for defining the complex~$ECH(M, \lambda)$, while the ECH-index-$2$ curves provide the $U$-map we are interested in.

The way the ECH-index-$1$ and $2$ curves approach their limit orbits is governed by the {\it partition conditions} \cite[Section 3.9]{Hu}. 
These relate the global and local multiplicities of the ends. 
For an elliptic limit periodic orbit, these conditions are rather cumbersome, but for a positive ({\it resp.} negative) hyperbolic limit periodic orbit, both its positive and negative global multiplicities are only allowed to be $0$ or~$1$ ({\it resp.} 0 or 2). 
This last condition is consistent with the way the ECH-index-$1$ or~$2$ curves involved in the definition of the differential or in the $U$-map break, see \cite[Section 5.4]{Hu}:
if a breaking involves a positive hyperbolic periodic orbit with multiplicity strictly larger than $1$, then there is an even number of ways to glue and these contributions algebraically cancel. 
 
Also, thanks to the case of equality in the writhe bound when the partition conditions are satisfied \cite[Lemma 5.1]{Hu}, near a hyperbolic limit periodic orbit, every orbit in its stable or unstable manifold does not intersect the corresponding positive or negative cylindrical end, see Figure~\ref{figure: ECH}.
This nice fact will not be used in the construction; it shows however that one might not expect our construction to provide supporting rational open books on the nose.

\begin{figure}[ht]
\includegraphics[width=.4\textwidth]{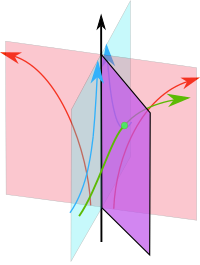}
\caption{The projection in~$M$ of an ECH-index $1$ or $2$ curve in~$\R\times M$, near an end which is a positive hyperbolic periodic orbit~$\gamma$ of~$R_\lambda$. 
The projection of the curve (purple) does not intersect the stable and unstable manifolds of~$\gamma$ (blue and red), so it lies in one of the four local sectors. 
It looks like an annulus bordered by~$\gamma$. 
It intersects all the orbits of~$R_\lambda$~(green) in its local sector. 
As suggested by the picture, its extension to the blown-up orbit~$\Sigma_\gamma$ is transverse to the extension of~$R_\lambda$ (as in Figure~\ref{figure: brokencomponent1} right), so it is a $\partial$-strong $R_\lambda$-section. 
The global positive multiplicity of~$\gamma$ is~$1$ and its global negative multiplicity is~$0$. 
If $\gamma$ was a negative periodic orbit, the picture would look similar, except there would be a second component of the projection of the curve in the opposite quadrant. 
}\label{figure: ECH}
\end{figure}

The map $U : ECC(M,\lambda)\to ECC(M,\lambda)$ is a degree $-2$ map counting pseudo-holomorphic curves passing through a point $(0,z)$ of the symplectization $\R \times M$ of $M$, where $z$ does not sit on a periodic orbit of $R_\lambda$. 
Now, there exists a finite orbit set $\Gamma =\Sigma_{i=1}^{k} \Gamma_i$ whose class $[\Gamma]$ in $ECH(M,\lambda)$ is such that $U([\Gamma])\neq 0$.
 Even though the map $U$ depends on the choice of~$z$ at the chain level, it does not at the homology one and so the fact that $U([\Gamma])\neq 0$ does not depend on $z$, see \cite[Section 3.8]{Hu}. 
The nonvanishing of the $U$-map is established via the naturality of the isomorphism between Heegaard Floer homology and embedded contact homology with respect to the $U$-map \cite{CGH0,CGH1,CGH2,CGH3} and the non-triviality of the $U$-map in Heegaard Floer homology \cite[Section 10]{OS}, or via the isomorphism with Seiberg-Witten Floer homology, as explained in~\cite{CHP}.

Recall that the {\it action}~$\mathcal{A}(\gamma)$ of an orbit $\gamma$ of $R_\lambda$ is the integral~$\int_\gamma \lambda$. 
The action of a collection of orbits is the sum of the actions of its elements, counted with multiplicities.
By the nondegeneracy assumption, there are only finitely many periodic orbits of action less than the action~$\mathcal{A} (\Gamma)$ of~$\Gamma$. 
 The {\it spectrum} of a class $[\Gamma] \in ECH(M,\lambda)$ is the minimal action of an orbit set representing $[\Gamma$].  

The main input from ECH-holomorphic curve theory is the following.

\begin{lemma}\label{lemma: main} 
 
Let $M$ be a compact 3-manifold and $\lambda$ a nondegenerate contact form on~$M$. 
For $\Gamma$ an ECH-cycle consisting of a sum of finite sets of periodic orbits of~$R_\lambda$ such that $U([\Gamma])\neq 0$, denote by~$\mathcal{P}$ the finite set of periodic orbits of the Reeb vector field~$R_\lambda$ of action less than $\mathcal{A} (\Gamma)$. 
Then
\begin{itemize}
\item
for every $z$ in $M\setminus \mathcal{P}$, there exists an embedded pseudo-holomorphic curve $u:F\to \R\times M$ asymptotic to periodic orbits of~$R_\lambda$ in $\P$ and such that the projection to~$M$ of some point in the interior of~$u$ is $z$;
\item
for every $z$ in~$\mathcal{P}$, there is a similar curve such that $z$ is either in the interior of the projection or in a boundary component of its closure. 
In the latter case, if the periodic orbit~$\gamma$ of~$R_\lambda$ containing~$z$ is positive ({\it resp.} negative) hyperbolic, the considered pseudo-holomorphic curve has global multiplicity~1 ({\it resp.}~2) along~$\gamma$ and it does not intersect locally the stable and unstable manifolds of~$\gamma$. 
\end{itemize}
 
\end{lemma}

\begin{proof} By definition of the $U$-map, for every generic $z\in M$, there is an ECH-index $2$ embedded curve in $\R\times M$ from $\mathcal{P}$ and passing through $(0,z)$.
Now, if $z$ is fixed, it is the limit of a sequence of generic points $(z_n)_{n\in \N}$. Through $(0,z_n)$ passes a pseudo-holomorphic curve~$u_n$ whose positive end is in $\mathcal{P}$. By compactness for pseudo-holomorphic curves in the ECH context, there is a subsequence of $(u_n)_{n\in\N}$ converging to a pseudo-holomorphic building, a component of which is an embedded pseudo-holomorphic curve through~$(0,z)$. All the asymptotics of the limit curves are in $\P$, since they all have action less than $\mathcal{A} (\Gamma)$. In particular, when $z$ is in $M\setminus \mathcal{P}$ it is contained in the interior of the projection of the curve to $M$. 

If $z$ is contained in one of the orbits of $\mathcal{P}$, it might be in a limit end of the curve and thus in the boundary of the closure of the projection of the curve to $M$. 
 
The second item then follows from the fact that positive ({\it resp.} negative) hyperbolic ends of ECH-index $1$ or $2$ curves may only have global multiplicity 0 or~1 ({\it resp.} 0 or~2) and they cannot intersect the stable and unstable manifolds around an end, as discussed previously. 
\end{proof}

Note that the compactness argument we refer to in the previous proof includes taking care of possibly unbounded genus and relative homology class, see \cite[Sections 3.8 and 5.3]{Hu}.

\subsection{From holomorphic curves to $\partial$-strong $R_\lambda$-sections}\label{subsection: sections}

Given a $\partial$-strong $R_\lambda$-section, the
local and global multiplicities of the boundary components are defined in the same way as for pseudo-holomorphic curves: if $i:S\to M$ is a $R_\lambda$-section and $c$ is a boundary component of $S$ mapped onto a periodic orbit~$\gamma$ of $R_\lambda$, the {\it local multiplicity} of $c$ is the degree of the induced cover $i:c\to\gamma$, and the {\it global multiplicity} of~$\gamma$ is the sum of the local multiplicities of the boundary components of~$S$ that are mapped on~$\gamma$.  
 
\begin{defn}\label{defn: link}
Let $S$ be a (not necessarily connected) $\partial$-strong $R_\lambda$-section.
\begin{itemize}
\item An orbit~$\gamma$ of~$R_\lambda$ is {\it asymptotically linking}~$S$ if for every $T\in\R$ the arcs~$\gamma([T, +\infty))$ and~$\gamma((-\infty, T])$ intersect $S$. 

\item If $\gamma$ is a periodic orbit in $\partial S$, consider its unit normal bundle~$\Sigma_\gamma$. The {\it self-linking} of $\gamma$ with~$S$ is the rotation number of the extension of~$R_\lambda$ to~$\Sigma_\gamma$, with respect to the 0-slope given by~$\partial_\gamma S$. 
\end{itemize}
\end{defn}

The most relevant case for us is when $\gamma$ is a hyperbolic periodic orbit: 
in this case the stable and unstable directions around~$\gamma$ locally determine four sectors. 
If the intersection of~$S$ with a tubular neighborhood of~$\gamma$ remains in one of these sectors, then the self-linking of~$S$ with~$\gamma$ is zero (as in Figure~\ref{figure: ECH}). 
This is the case of the ECH-index-1 or 2 curves given by Lemma~\ref{lemma: main}.

\medskip

We now promote the projected pseudo-holomorphic curves from Lemma~\ref{lemma: main} into $R_\lambda$-sections. 

\begin{prop}\label{corollary: embedded} 
 In the context of Lemma~\ref{lemma: main}, 
 for every $z$ in $M$ there exists a $\partial$-strong $R_\lambda$-section $S$ with boundary in $\P$ passing through $z$. 
Moreover, if $z$ is in~$M\setminus \P$, it is contained in the interior of $S$. 
Every positive ({\it resp.} negative) hyperbolic orbit~$k$ in $\partial S$ with self-linking number 0 has {\it local} multiplicity $1$ ({\it resp.}~$2$). 
\end{prop}

\begin{proof} Consider the point~$(0,z)$ in $\R\times M$. 
Denote by $\hat S_0$ the embedded pseudo-holomorphic curve from Lemma \ref{lemma: main} passing through~$(0,z)$.
It has a finite number of points where it is tangent to the holomorphic $\langle \partial_s, R_\lambda \rangle$-plane, where $s$ is the extra $\R$-coordinate. 
Indeed, Siefring proved that close enough to its limit end orbits in~$\P$, the pseudo-holomorphic curve is not tangent to this plane field~\cite[Theorem 2.2]{Sie2} and, by the isolated zero property for holomorphic maps, all these tangency points are isolated. 

Denote by $S_0$ the projection of $\hat S_0$ in~$M$. 
It contains $z$.
The projection of the tangency points correspond exactly to the points where $S_0$ is not immersed. 
We call these points the singular points of $S_0$ and we denote them by $x_i$, $i=1,\dots,p$.
Everywhere else $S_0$ is positively transverse to the Reeb vector field $R_\lambda$. 

 We apply the following desingularisation procedure to $S_0$, due to Fried for the part away from the singularities~\cite{Fr} . 

\begin{lemma}\label{lemma: desingularization} Let $R$ be a non-zero vector field on a $3$-manifold $M$ (here not necessarily Reeb) and $S_0$ be a compact surface with boundary in $M$, immersed away from
a finite number of interior points, whose boundary is tangent to $R$ and whose interior is positively transerse to $R$ away from its singularities.
 Assume that $S_0$ is $\partial$-strong with respect to~$R$.
For every $\epsilon>0$ and $x\in S_0$, consider the trajectory $\gamma_\epsilon (x)$ of $R$ through $x$ for time in $(-\epsilon, \epsilon)$.
Then there exists a $\partial$-strong $R$-section $S$ with $\partial S \subset \partial S_0$ and intersecting every $\gamma_\epsilon (x)$ for $x\in S_0$.
\end{lemma}
\begin{proof}
We modify $S_0$, first away from its singular points, and then around the singular points. 
First we put $S_0$ in general position by a generic perturbation, keeping it transverse to~$R$ away from the singular points: we make its self-intersections transverse and its triple self-intersection points isolated. 

Since the singular points are disjoint from the boundary components, we surround each singular point $x_i$, $i=1,\dots,p$, by a small ball~$B_i$ of the form of a flow box $D_i^2 \times [-1,1]$ that does not intersect $\partial S_0$, where the $[-1,1]$-direction is tangent to~$R$, so that the singular point $x_i$ is at the center and the boundary discs~$D_i^2\times \{\pm 1\}$ are disjoint from~$S_0$. 
Then $S_0$ only intersects~$\partial B_i$ along its vertical boundary $(\partial D_i^2)\times [-1,1]$. 

On $M\setminus (\cup_{i=1}^p B_i)$ the surface $S_0\setminus (\cup_{i=1}^p B_i)$ is immersed. 
We first treat the part~$S_0\setminus (\cup_{i=1}^p B_i)$ of~$S_0$. 
We blow-up the manifold $M$ along the boundary circles $\partial S_0$ to obtain a compact manifold $M_{\partial S_0}$ bounded by $2$-tori.
Since $S_0$ is $\partial$-strong, it extends to a compact surface still denoted $S_0$ on $M_{\partial S_0}$. 
By the $\partial$-strong property, $\partial S_0$ is transverse to the extension of $R$ along~$\partial M_{\partial S_0}$.
We can further assume by perturbing that $S_0$ only has transverse self-intersections even in~$\partial M_{\partial S_0}$.

The surface $S_0$ has a transversal given by $R$ so that we can resolve its self-intersections coherently to get an embedded surface~$S_1$ in $M_{\partial S_0}\setminus (\cup_{i=1}^p B_i)$, positively transverse to $R$, as in Figure~\ref{figure: FriedBord}. 
Each boundary torus of $M_{\partial S_0}$ has well-defined meridians: they correspond to the unit normal bundle of points in $\partial S_0$.
We can then isotope the resolved boundary in $\partial M_{\partial S_0}$, by an isotopy transversal to $R$, so that it becomes either transverse to the foliation by meridians or equal to a collection of meridian circles (see Figure~\ref{figure: dFriedSum}).
This isotopy can be extended to an isotopy of $S_0 \setminus (\cup_{i=1}^p B_i)$ transverse to $R$ and supported in a small neighborhood of~$\partial M_{\partial S_0}$.

\begin{figure}[ht]
\centering
\includegraphics[width=.5\textwidth]{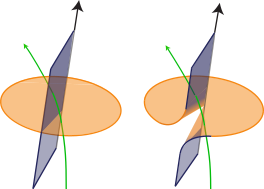}
\caption{How to resolve a line of intersections transversally to the flow.} 
\label{figure: FriedBord}
\end{figure}
 
\begin{figure}[ht]
\centering
\begin{picture}(102,40)(0,0)
\put(0,5){\includegraphics[width=.8\textwidth]{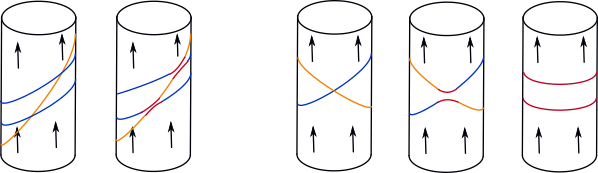}}
\put(5,0){(a)}
\put(24,0){(b)}
\put(55,0){(c)}
\put(74,0){(d)}
\put(93,0){(e)}
\end{picture}
\caption{ The blow-up~$\Sigma_\gamma$ of a boundary orbit~$\gamma$ in~$\partial S_0$. 
(a, c) The boundary~$\partial_\gamma S_0$ is a collection of curves transverse to~$R$ on~$\Sigma_\gamma$. 
(b, d) One resolves the double points of~$\partial_\gamma S_0$ transversally to~$R$. 
(b) If the obtained curves are transverse to the meridians, one stops here. 
(e) If the obtained curves are isotopic to meridians, one makes an isotopy of~$S_1$ and $\partial_\gamma S_1$ that turn them into meridians.} 
\label{figure: dFriedSum}
\end{figure}

Once this is done, the isotoped surface gives an immersed surface $S_1$ in~$M\setminus (\cup_{i=1}^p B_i)$ after blow-down. In case the intersection of the isotoped surface with a boundary component $\Sigma_\gamma$ of $\partial M_{\partial S_0}$ is along a meridian circle, the blow-down surface does not have $\gamma$ as a boundary component any more and crosses $\gamma$ transversally. The other components are $\partial$-strong.

We are left with the part inside the balls~$B_i$. 
The new surface $S_1$ now intersects every sphere $\partial B_i$ along an embedded collection of circles contained in the vertical part $(\partial D^2)\times [-1,1]$ and transverse to $R$, i.e. the $[-1,1]$, direction. 
We can extend $S_1$ inside the balls $B_i$ by an embedded collection of disks transverse to $R$. 
We get a surface $S$ which is a $\partial$-strong $R$-section. 
\end{proof}

The resolution of the self-intersections, described in the proof above, along a line of double-points ending in a boundary component is pictured in Figure~\ref{figure: FriedBord}: before and after the resolution. 
The resolution of triple points of intersection, coming generically from the transverse intersections of two branches of double points, are not an issue, since we can locally resolve one branch after another in any order and extend this resolution away, see Figure~\ref{figure: Desingularize}. 
Similarly, there might be lines of double points ending at the boundary of the balls $B_i$ surrounding singular points. We resolve them outside, up to the boundary of the balls. 
 
\begin{figure}[ht]
\centering
\includegraphics[width=.8\textwidth]{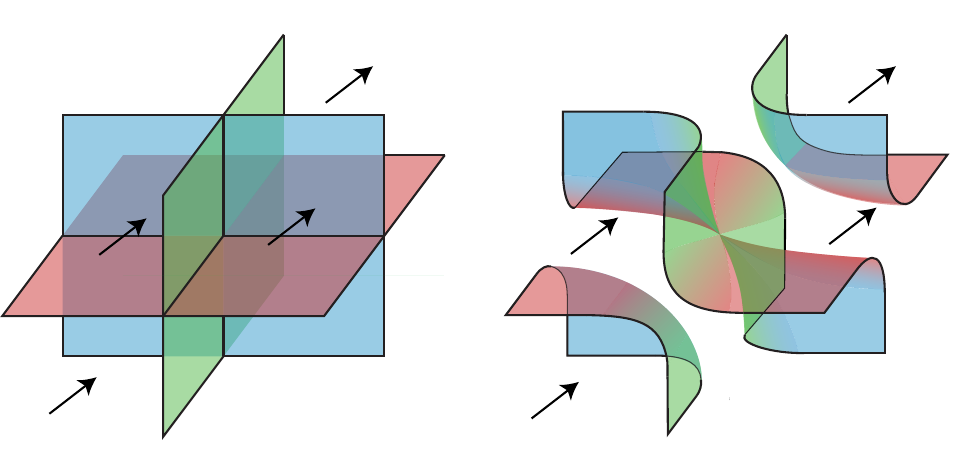}
\caption{How to resolve a triple point of self-intersections. One other way to picture what happens is to first resolve the intersection of the union of two surfaces and then add and resolve the intersections with a third one. }
\label{figure: Desingularize}
\end{figure}

In order to finish the proof of Proposition~\ref{corollary: embedded}, we apply Lemma \ref{lemma: desingularization} to~$S_0$ and $R_\lambda$ to get a surface $S$ and remark that it is easy to perform these surgery operations and keep the constraint of passing through the point $z$. 
Since~$S_0$ was $\partial$-strong as the projection of an ECH curve, then one checks that~$S$ can be taken to be $\partial$-strong .
 
Finally, along every positive or negative hyperbolic periodic orbit of $\partial {S}$ having self-linking number $0$ with~${S}$ (see Definition~\ref{defn: link}), the surface~$S$ is made of longitudinal annuli that do not intersect the stable and unstable directions. 
Since these annuli are embedded, their coordinates in a (longitude-meridian) basis are primitive, hence they can only have local degree $1$ if the orbit is positive hyperbolic or~$2$ if the orbit is negative hyperbolic. 
\end{proof}

Observe that the surface ${S}$ constructed above might have several connected components, but each connected component is a $\partial$-strong $R_\lambda$-section with boundary, because a closed surface cannot be transverse to a Reeb vector field.

\medskip

Recall that $\mathcal{P}$ denotes the set of those periodic orbits of~$R_\lambda$ whose action is smaller than~$\mathcal{A}(\Gamma)$. 
We now analyze what happens around the orbits of~$\mathcal{P}$.

\begin{lemma}\label{lemma: P}
In the context of Lemma~\ref{lemma: main}, let $k$ be a periodic orbit of~$R_\lambda$ that belongs to~$\mathcal{P}$. 
Then one of the following holds: 
\begin{enumerate}
\item there exists a $\partial$-strong $R_\lambda$-section transverse to~$k$;
\item there exists a $\partial$-strong $R_\lambda$-section containing $k$ in its boundary and whose self-linking with $k$ is non-zero;
\item there exists a $\partial$-strong $R_\lambda$-section containing $k$ in its boundary and whose self-linking with $k$ is zero. In this case $k$ is hyperbolic and each one of the sectors transversally delimited by the stable and unstable manifolds of~$k$ is intersected by at least one $\partial$-strong $R_\lambda$-section having $k$ as a boundary component.
\end{enumerate}
\end{lemma}

\begin{proof}
Let $z$ be a point of~$k$. 
Consider a small disc~$D$ containing~$z$ and transverse to~$k$, and take a sequence $z_n$ of generic points in~$D\setminus\{z\}$ converging to~$z$. 
In~$\R\times M$, take a sequence~$u_n$ of ECH-index 2 embedded pseudo-holomorphic curves through~$(0, z_n)$. 
In the limit pseudo-holomorphic building, there is a pseudo-holomorphic curve containing~$(0,z)$, whose projection~$S_0$ to~$M$ contains $z$. 
Then at least one of the following holds: 
\begin{itemize}
\item[(a)] $S_0$ is transverse to~$k$;
\item[(b)] $S_0$ contains $k$ in its boundary and its self-linking with $k$ is non-zero (this is the case when $k$ is an elliptic boundary orbit, but the partition conditions explained in Section~\ref{sec: ECH} imply that it cannot happen if $k$ is a hyperbolic boundary orbit);
\item[(c)] $S_0$ contains $k$ in its boundary and its self-linking with $k$ is zero.\end{itemize}

Observe that these are not disjoint cases: if $S_0$ is not embedded, a periodic orbit can be in the boundary of $S_0$ and intersect transversely $S_0$ in its interior.

Applying the resolution procedure of Lemma \ref{lemma: desingularization} to a surface~$S_0$ satisfying at least one of the first two cases, we obtain a~$\partial$-strong $R_\lambda$-section $S$. Then either $S$ is transverse to $k$ or $\partial S$ contains~$k$. In the later case the resolution procedure can change the self-linking number along $k$. If $k$ satisfies (a) and~(c) as boundary orbit of $S_0$, then $k$ will have non-zero self-linking number with respect to $S$. If $k$ satisfies (b) alone, or (a) and (b), then the self-linking number of $k$ with respect to $S$ will still be non-zero.

Now assume that $k\in \partial S_0$ satisfies (c) only. 
The limit curves satisfy the asymptotic conditions of \cite[Theorem 2.8]{HWZ1}, \cite{Sie1} \cite[Theorem 2.2]{Sie2}, \cite[Proposition 3.2]{HT2}, \cite[Theorem 3.11]{We}:
the order 1 expansion of the planar coordinate $z(s,t)$ (transverse to $k$ in $M$) says that along the boundary a limit curve is tangent (at the first order) to an annular half-helix. 
Then $k$ cannot be elliptic: in that case the self-linking of $S$ with $k$ would be irrational, which is absurd. 
Hence $k$ is hyperbolic and its tubular neighborhood has two or four sectors delimited by its stable and unstable manifolds. 
 From now on we assume that the points $z_n$ all belong to the same sector while converging to $z$. 

Now the transversality to the Reeb vector field implies that the half-helix has to stay in one sector. 
Indeed if it changes sector, in order to stay transverse to~$R_\lambda$, it can only change in one direction. 
So, in order to come back to the original sector after following $k$ for one longitude, it has to wrap around $k$: in this case the self-linking number would be positive, a contradiction. 

There are two cases depending on whether $k$ is at an intermediate level of the limit building with non-trivial positive and negative ends, or there is only a non-trivial positive and negative end (possibly followed by connectors). 

In the first case, for the limit building, the positive and negative ends at $k$ both have to follow a half-helix contained in a sector of $k$. If none of these sectors is the one containing the sequence $z_n$, then, close to the breaking curve, the curve has to enter and exit the sector containing $z_n$. 
We then look at its intersection with the stable and unstable manifolds delimiting the sector. 
This is a collection of circles that have to all be positively transverse to the Reeb vector field foliating the stable/unstable manifolds. 
In particular, none of the circles is contractible in the stable/unstable manifolds. 
The non-contractible circles have a fixed co-orientation given by the Reeb vector field: when the curve enters a sector along a stable/unstable manifold, it cannot exit along the same stable/unstable manifold nor along the other unstable/stable manifold.
 So this situation cannot happen: the limit curve has to be in the sector containing~$z_n$ near one end. 

In the second case, the limit building has only one non-trivial end on~$k$ (though there can be connectors) defining a half-helix (which is more a half-annulus) . 
The sequence of pseudo-holomorphic curves $u_n$ is converging to this half-helix and $k$ must be an end of the $u_n$'s. Now we have by the same argument as in the first case that $z_n$ has to be either on the same sector than the half-helix of $u_n$ or in the same sector than the one of the limit curve, otherwise we have a subsurface near the end of $u_n$ for $n$ large enough that is crossing a sector.

In both cases, we see that the limit building has a component with an end approaching $k$ by an annulus in the corresponding sector. 
\end{proof}

We can now assemble all the $\partial$-strong $R_\lambda$-sections together, by possibly changing some of them.


\begin{lemma}\label{lemma: rlsection}
In the context of Lemma~\ref{lemma: main}, there exists a finite number of\linebreak $\partial$-strong $R_\lambda$-sections with disjoint interiors, intersecting all orbits of $R_\lambda$. 

If an orbit of $R_\lambda$ is not asymptotically linking this collection of sections, it converges to one of their boundary components, which is a hyperbolic periodic orbit $k$ with local multiplicity $1$ or $2$.
In this case, each one of the sectors transversally delimited by the stable and unstable manifolds of~$k$ is intersected by at least one $\partial$-strong $R_\lambda$-section having the orbit as a boundary component.
\end{lemma}

\begin{proof} 
Start with a finite cover by flow-boxes of the complement of a small enough open neighborhood of $\P$. 
Through every point in a flow-box, Corollary~\ref{corollary: embedded} provides an embedded $\partial$-strong $R_\lambda$-section with boundary in the orbits of $\P$.
Since the closure of every flow-box is compact, there is a finite collection of surfaces intersecting every portion of orbit in the flow-box.
Using the desingularisation process described in Lemma~\ref{lemma: desingularization}, we can resolve the intersections between the $R_\lambda$-sections to obtain a finite collection of disjoint $\partial$-strong $R_\lambda$-sections~$S_1, \dots, S_n$ that intersect all the orbits in the complement of an open neighborhood of~$\P$ and whose boundary is contained in~$\P$.

We now analyse what happens in the neighborhood of~$\P$. 
Let $k$ be an orbit in~$\P$. 
If at least one of the surfaces $S_1, \dots, S_n$ is transverse to $k$, then by disjointness all are transverse to~$k$ (or do not even intersect~$k$), in which case there is nothing to do.

In the case where none of $S_1, \dots, S_n$ is transverse to~$k$, we apply Lemma~\ref{lemma: P} to~$k$. In the case of Lemma~\ref{lemma: P}~(2), we add the $\partial$-strong $R_\lambda$-section $S$ given by Lemma~\ref{lemma: P} to the collection~$S_1, \dots, S_n$ and we resolve once more the intersections of~$S\cup S_1\cup \dots\cup S_n$ using the common transverse direction $R_\lambda$. We obtain a new collection of $\partial$-strong $R_\lambda$-sections with disjoint interiors intersecting~$k$.

The remaining case, Lemma~\ref{lemma: P}~(3), arrises when $k$ is hyperbolic and the $\partial$-strong $R_\lambda$-sections given by Lemma~\ref{lemma: P} have self-linking equal to zero. 
Then the stable and unstable manifolds of the periodic orbit $k$ transversally delimit four sectors in a neighborhood of a point in the orbit, and Lemma~\ref{lemma: P} yields four surfaces (two if $k$ has negative eigenvalues) that cover all the sectors around~$k$. 
Again, we add these new $\partial$-strong $R_\lambda$-sections to our previous collection of $\partial$-strong $R_\lambda$-sections and then resolve the intersections of this new family using the common transverse direction $R_\lambda$. 
\end{proof}

Finally we have a $\partial$-strong $R_\lambda$-section $S$ (possibly disconnected), so that every orbit of $R_\lambda$ is either a boundary component or intersects $S$ transversely.
The union~$K$ of the boundary orbits of $S$ is a subset of~$\mathcal{P}$. 
If every orbit of~$K$ is asymptotically linking $S$, we get a rational open book. However, we can have boundary components where some orbits of $R_\lambda$ accumulate without intersecting the corresponding surface. 
These boundary components are necessarily hyperbolic periodic orbits and they all have local multiplicity one or two. 
We have now all the elements to prove Theorem~\ref{thm: main}.

\begin{proof}[Proof of Theorem~\ref{thm: main}] 
Lemma~\ref{lemma: rlsection} gives an $\partial$-strong $R_\lambda$-section~$S$ intersecting every orbit of the Reeb flow~$R_\lambda$, and we want to turn it into a broken book decomposition. 
Said differently, the $\partial$-strong $R_\lambda$-section $S$ forms a trivial lamination of~$M\setminus K$, and we have to extend $S$ into a foliation of~$M\setminus K$. 

For convenience, we first double all the components of~$S$ who have at least one boundary component on a hyperbolic periodic orbit and are not asymptotically linking the orbits in their stable/unstable manifolds. The two copies are separated in their interior by pushing along the flow of $R_\lambda$. We keep the notation $S$ for this new $\partial$-strong $R_\lambda$-section.
We then cut $M$ along~$S$ and delete standard Morse type neighborhoods of the hyperbolic orbits with self-linking number equal to zero of $\partial S$ as in Figure~\ref{figure: foliation}. 
 We denote by~$M_0$ the resulting manifold.

We claim that $M_0$ is a sutured manifold, foliated by compact $R_\lambda$-intervals. 
 Indeed, 
observe that when $R_\lambda$ is asymptotically linking~$S$ near an orbit~$k$ of~$\partial S$, the flow of $R_\lambda$ near $k$ has a well-defined first-return map on~$S$. These orbits are then decomposed by $S$ into compact segments.
When we are near a positive hyperbolic periodic orbit $k_b$ where the flow is not asymptotically linking with $S$, then~$S$ intersects a Morse type tubular neighborhood of $k_b$ in at least $8$ annuli, at least two in each sector (because of the doubling operation). Between two annuli in the same sector, the orbits of~$R_\lambda$ are locally going from one annulus to the other, thus an orbit is decomposed into compact intervals. 
If two consecutive annuli belong to different adjacent sectors, then they are cooriented by $R_\lambda$ and can be pushed in the direction of the invariant manifold of~$k_b$ separating them and glued to form an annulus transverse to~$R_\lambda$. 
Again every local orbit of $R_\lambda$ ends or starts in finite time on some (possibly glued) annulus.
 Thus $M_0$ is an $I$-bundle with oriented fibers, hence a product, and the conclusion follows. 

To finish the proof, we just have to glue back the Morse-type neighborhood of the hyperbolic orbits with self-linking number equal to 0 of~$\partial S$ to~$M_0$, that we foliate with the local model of a broken book, as on the bottom of Figure~\ref{figure: foliation}. The construction near a negative hyperbolic periodic orbits is analogous.
\end{proof}

\begin{figure}[ht]
\centering
\includegraphics[width=.8\textwidth]{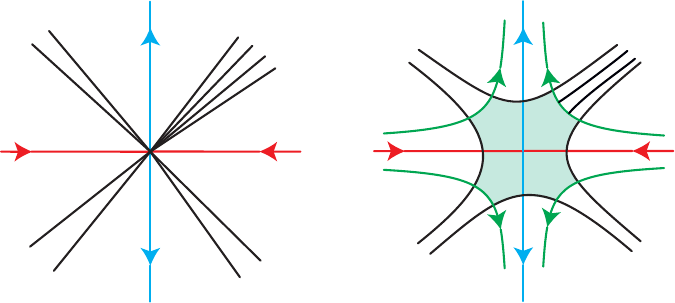}

\includegraphics[width=.78\textwidth]{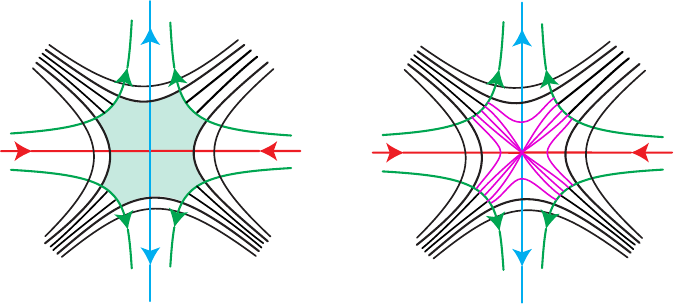}
\caption{A transversal view of a broken component $k_b$ of the binding. 
 On the top left, the $\partial$-strong $R_\lambda$-section~$S$ around~$k_b$ after the doubling procedure is shown (black). 
It is made of an even number of annuli---the point being to have at least~$2$---in each sector. 
 For building the foliation of~$M\setminus K$, one reconnects the first and last part of~$S$ in each sector in the neighborhood of~$k_b$, removes a smaller neighborhood (green), and cuts the resulting manifold along the (modified) $S$. 
The result is a trivial $I$-bundle (top right).
From the foliation of the trivial $I$-bundle (bottom left), one adds the neighborhood of~$k_b$ back, foliated with the local model of a broken binding orbit (bottom right).}
\label{figure: foliation}
\end{figure}

\begin{rmk} If $R_\lambda$ is supported by some open book decomposition, then every embedded projected pseudo-holomorphic curve gives rise to a new rational open book decomposition for $R_\lambda$ by the constructions above. Indeed, the self-linking numbers along the boundary components of the holomorphic curve all become positive after resolving the intersections with a page. 
In particular, the abundance of embedded projected pseudo-holomorphic curves given by the non-triviality of the $U$-map in embedded contact homology, or the differential, typically furnishes many open books for the same Reeb vector field.
\end{rmk}

\section{Applications}\label{sec-app}

The purpose of this section is to use broken book decompositions to study the dynamics of nondegenerate Reeb vector fields. 
The proofs are always divided into two cases: 
when the broken book decomposition is a rational open book decomposition ($K_b=\emptyset$), in which case one studies the diffeomorphism given by the first-return map on a page;
or when the broken binding is nonempty ($K_b\neq \emptyset$), in which case the stable and unstable manifolds of the broken binding components provide enough information to prove Theorems~\ref{thm: infinite} and~\ref{thm: entropy}. 

In Section \ref{ssec-int} we prove that these invariant manifolds always intersect. 
The case in which the intersections are transverse is simpler and the proof of Theorem \ref{thm: infinite} in this situation is explained in Section~\ref{ssec-strongly}. 
The proof without the transversality assumption requires considering several cases and destroying one-sided intersections. 
This proof is given in Section~\ref{ssec-nondeg}, the results there provide also a proof for Theorem~\ref{thm: entropy}. 
In Section~\ref{ssec-other} we discuss how to change a broken book decomposition with exactly one broken component into a rational open book decomposition supporting the same Reeb vector field, proving Theorem~\ref{thm: homoclinic}.
Finally in Section~\ref{ssec-improving}, we discuss the extension of Theorems \ref{thm: infinite}, \ref{thm: entropy} and \ref{thm: homoclinic} to an open neighborhood of the set of nondegenerate vector fields or strongly nondegenerate vector fields, accordingly.

\subsection{Flows on 3-manifolds: entropy and horseshoes}\label{ssec: horseshoe}

In this section we consider a general non-singular flow on a closed 3-manifold and recall certain aspects of the dynamics that we use in the proofs of Section~\ref{sec-app}. 

Let $\phi^t:M\to M$ be a flow on a compact 3-manifold. The flow needs to be $C^2$ for most of the results below. 
The topological entropy $h_{top}(\phi^t)$ is a non-negative number that measures the complexity of the flow. We review a definition, originally introduced by Bowen \cite{Bow70}. 
We first endow $M$ with a metric $d$. Given $T>0$ we define
$$d_T(x,y)=\max_{t\in [0,T]}d(\phi^t(x),\phi^t(y)),$$
for any pair of points $x,y\in M$. A subset $S\subset M$ is $(T,\epsilon)$-separated if for all $x\neq y $ in $S$ we have that $d_T(x,y)>\epsilon$. Let $N(T,\epsilon)$ be the maximal cardinality of a $(T,\epsilon)$-separated subset of $M$. The topological entropy is defined as
$$h_{top}(\phi^t)=\lim_{\epsilon\to 0}\limsup_{T\to \infty}\frac{1}{T}\log(N(T,\epsilon)).$$

A Smale horseshoe is a compact invariant set of the flow, in which the dynamics is semi-conjugate to the suspension of a shift of finite type, that is the prototypical example of chaotic dynamics, see for example Section~1.9 of the book~\cite{KH}. 
We ask further that the map realising the semi-conjugation is continuous with respect to the metric $d$ in $M$ and the natural metric in the shift-space. There is a strong relation between positive entropy and horseshoes (see for example \cite{LS19} for the case of flows or the original work of Katok \cite{Katok} for diffeomorphisms):

\begin{thm}\label{thm: horseshoe}
Let $\phi^t$ be a $C^2$-flow on a closed 3-manifold generated by a non-vanishing vector field. Then $h_{top}(\phi^t)>0$ holds if and only if there exists a Smale horseshoe subsystem of the flow.
\end{thm}

As a consequence, if the topological entropy is positive the number of hyperbolic periodic orbits grows exponentially with respect to the period (see Theorem 1.1 in \cite{LS19}). 
A Smale horseshoe persists under small perturbations, hence having positive entropy is an open condition for 3-dimensional flows.

A Smale horseshoe can be obtained from a transverse homoclinic intersection, see for example Theorem 6.5.5 in \cite{KH}.
Recall that a {\it homoclinic} orbit lies in the intersection of the stable and unstable manifolds of the same hyperbolic periodic orbit, while a {\it heteroclinic} orbit is an orbit that lies in the intersection of a stable manifold of a hyperbolic periodic orbit and an unstable manifold of another hyperbolic periodic orbit. 
From the above discussion one can conclude that having a transverse homoclinic intersection implies that the topological entropy of the flow is positive and that there are infinitely many periodic orbits. 

We use the term {\it homo/heteroclinic orbit} for an orbit that is either homoclinic or heteroclinic. 

\begin{defn}\label{defn: intersections}
Consider a flow $\phi^t$ and two hyperbolic periodic orbits (not necessarily distinct) with a homo/heteroclinic orbit connecting them. 
\begin{enumerate}
\item The orbits have a  {\it homo/heteroclinic connection} if the corresponding stable and unstable manifolds coincide.
\item The orbits have a {\it crossing homo/heteroclinic intersection} if the stable and unstable manifolds cross topologically, where crossing is in the topological sense of Burns and Weiss \cite{BW}. It means that there exists an embedded disk $D$ in $M$ transverse to the flow such that the intersection of $D$ with the stable and unstable manifolds contains two arcs $\gamma_{st}$ and $\gamma_{un}$ with boundaries in $\partial D$ and whose endpoints alternate in $\partial D$ (in particular they have to intersect and ``cross").
\item The orbits have a {\it one-sided homo/heteroclinic intersection}, or orbit, if the stable and unstable manifolds intersect and do not cross.
\end{enumerate}
\end{defn}

 Note that these definitions include the case where the stable and unstable manifolds intersect along an interval transverse to the flow and either cross or stay on one side at the boundary components.
 
 The transversality condition to obtain a Smale horseshoe can be weakened.
A heteroclinic cycle is a sequence of hyperbolic periodic orbits $k_0,k_1,\ldots, k_{n-1}, k_n=k_0$ such that the unstable manifold of $k_i$ intersects the stable manifold of $k_{i+1}$ for every $0\leq i\leq n-1$. In dimension 3, a heteroclinic cycle such that the intersection between the unstable manifold of $k_0$ and the stable manifold of $k_1$ is a crossing intersection and all other intersections are one-sided suffices to have positive topological entropy, see~\cite[Theorem 2.2]{BW}.

\subsection{Reeb vector fields supported by rational open books}\label{ssec: flux}

We consider the case in which a nondegenerate Reeb vector field $R_\lambda$ is supported by a rational open book decomposition and review known conclusions on the dynamics. We prove, at the end of the section, that such a vector field has two or infinitely many periodic orbits, see Theorem~\ref{thm: periodic}.

Let $(M,\lambda)$ be a contact closed 3-manifold such that the Reeb vector field~$R_\lambda$ is supported by a rational open book decomposition. 
Taking a page~$S$ of the open book, there is a well-defined homeomorphism $h:S\to S$ given by the first-return map along the flow. Moreover, the differential 2-form $d\lambda$ defines an exact area form on $S$ that is invariant under $h$. The map $h$ satisfies a stronger condition than being area-preserving. Consider the induced map 
$$h_*-I :H_1(S,\Z)\to H_1(S,\Z).$$

\begin{defn}\label{defn: fluxzero}
A homeomorphism $h:S\to S$ has {\it flux-zero} if for every curve $\gamma$ whose homology class is in $\ker (h_*-I)$, we have that ~$\gamma$ and $h(\gamma)$ cobound a $d\lambda$-area zero $2$-chain.
\end{defn}

Observe that this definition is valid for any exact area form. If $h$ is given by the first-return map of a Reeb vector field, it is clear that it is a flux-zero homeomorphism: take a curve $\gamma$ whose homology class is in $\ker (h_*-I)$, then $\gamma$ and $h(\gamma)$ bound a 2-chain in $S$ and also bound a surface tangent to the Reeb vector field in the ambient manifold $M$. 
The integral of $d\lambda$ on the union of the 2-chain and the surface has to be equal to zero by Stokes' theorem and is equal to the integral of $d\lambda$ on the 2-chain. 

The relation between flux-zero homeomorphisms and Reeb vector fields was studied in the paper \cite{CHL}. Given a surface $S$, possibly with boundary, and a homemorphism $h:S\to S$, we can consider the mapping torus $\Sigma(S,h)=S\times [0,1]/(x,0)\sim (h(x),1)$ that is a 3-manifold with boundary if $\partial S\neq\emptyset$. 
The vector field $\partial_t$, where $t$ is the $[0,1]$-coordinate, has as first-return map on $S\times\{0\}$ the map $h$. Observe that all the orbits of this flow are non-contractible, by construction.

The relation is given by Giroux's lemma~\cite[Lemma 2.3]{CHL} when $h$ is the identity near $\partial S$:

\begin{lemma}\label{lemma: fluxReeb}
Let $(S,d\lambda)$ be a compact oriented surface endowed with an exact area form and $h:S\to S$ a flux-zero homeomorphism that is the identity near $\partial S$. Then there exists a contact form $\alpha$ on $\Sigma(S,h)$ whose Reeb vector field $R_\alpha$ corresponds to $\partial_t$ and has ~$h$ as first-return map on $S\times \{0\}$.
\end{lemma}

\medskip

We now recall the Nielsen-Thurston classification~\cite{Th}. 

\begin{thm}\label{thm: NT}
If $S$ is a compact surface, possibly with boundary, and $h:S\to S$ a homeomorphism, then there is a map $h_0:S\to S$ isotopic to $h$ (not relative to the boundary) such that at least one of the following holds:
\begin{itemize}
\item[-] $h_0$ is periodic, meaning that some power of $h_0$ is the identity map;
\item[-] $h_0$ preserves some finite union of disjoint simple closed curves on $S$, in this case $h_0$ is called reducible;
\item[-] $h_0$ is pseudo-Anosov.
\end{itemize}
The three types are not mutually exclusive, but a pseudo-Anosov homeomorphism is neither periodic nor reducible. 
\end{thm}

When $h_0$ is reducible, one can cut $S$ along a finite collection of simple closed curves to obtain a homeomorphism of a smaller surface (possibly disconnected) and apply Theorem~\ref{thm: NT} to each of the pieces. Iterating this process, one obtains a map isotopic to $h$ that breaks up into periodic pieces and pseudo-Anosov pieces (see for example Section 13.3 in \cite{FaMa}). By abuse of notation, we call this final map $h_0$. 

In terms of the dynamics of the original map $h$, if $h_0$ has a pseudo-Anosov piece, then the map $h$ has positive topological entropy and infinitely many periodic orbits. We now use this classification to prove the following statement. 

While writing this paper, we learned about a more direct and general proof (for homeomorphisms, possibly degenerate) by Le Calvez~\cite{L}.

\begin{thm}\label{thm: periodic} Let $S$ be a compact surface with boundary different from the disk or the annulus and $\omega =d\beta$ an exact area form on $S$. 
If $h:S\to S$ is a nondegenerate area-preserving diffeomorphism with zero-flux then $h$ has infinitely many different periodic points.
\end{thm}

\begin{proof} 
By Giroux's Lemma~\ref{lemma: fluxReeb}, the zero-flux condition implies that $h$ can be realised as the first-return map of the flow of a Reeb vector field on~$\Sigma(S,h)$. 
Consider the Nielsen-Thurston decomposition of $h$. 
If it has a pseudo-Anosov component, then the conclusion of the theorem classically holds by Nielsen-Thurston theory, even without any conservative hypothesis. 
Otherwise, all the pieces of $h$ in the decomposition are periodic and there is a power $h^n$ of $h$ that is isotopic to the identity on each piece. 
This means that~$h^n$ is isotopic to a composition of Dehn twists on disjoint curves. 
Note that it is sufficient to show that $h^n$ has infinitely many periodic points.
 
If $h^n$ is isotopic to the identity, the conclusion is given by a theorem of Franks and Handel \cite{FH}, extended by Cristofaro-Gardiner, Hutchings and Pomerleano to fit exactly our case with boundary ~\cite[Proposition 5.1]{CHP}. 

If $h^n$ is not isotopic to the identity, consider the Nielsen-Thurston representative $h_0$ of $h^n$ given by a product of Dehn twists along disjoint annuli and at least one annulus $A$ is not boundary parallel. 
We now realise $h_0$ as the first-return map of a Reeb vector field~$R_0$ on $S$ in $\Sigma(S,h^n)$. 
This flow has no contractible periodic orbits since it is transverse to the fibration  over $\mathbb{S}^1$ and thus can be used to compute cylindrical contact homology. 
In the mapping torus of $A$, we have $\mathbb{S}^1$-families of periodic Reeb orbits realising infinitely many slopes in the suspended thickened torus. 
A generic perturbation transforms an $\mathbb{S}^1$-family into an elliptic and a positive hyperbolic periodic orbit. 
These all give generators for the cylindrical contact homology, since the other orbits (corresponding to periodic points of $h_0$) belong to different Nielsen classes. 
The invariance of cylindrical contact homology, see \cite{BH, HN}, suffices to conclude that the Reeb orbits given by the mapping torus of $h^n$ are at least the number given by the rank of the cylindrical contact homology computed with $R_0$, i.e. an infinite number.
\end{proof} 

We obtain two corollaries of Theorem~\ref{thm: periodic} for Reeb vector fields. Observe that the first-return map on the page $S$ is well-defined on the interior of $S$.
It might not extend smoothly to $\partial S$. In that case,
we filter the cylindrical contact homology complex by the intersection number of orbits with the page, that is equal to the period of the corresponding periodic points of~$h$. 
We can then modify the map $h$ near $\partial S$ to a zero-flux area-preserving diffeomorphism $h_k$ so that $(1)$ the modified monodromy~$h_k$ extends to $\partial S$, $(2)$ the orbits of period less than or equal to $k$ in the Nielsen classes not parallel to the boundary are not affected and $(3)$ $h_k$ is isotopic to~$h$ and so has the same Nielsen-Thurston representative $h_0$.
The arguments developed in the previous proof then apply to show the existence of periodic points of~$h_k$ with period bounded above by $k$, these are also periodic points of $h$ with period bounded by $k$. Hence $h$, has infinitely many periodic orbits.

\begin{cor}
Let $R_\lambda$ be a Reeb vector field on a closed 3-manifold $M$ that is supported by a rational open book decomposition. Let $S$ be a page and $h$ the first-return map to $S$. Assume that the Nielsen-Thurston decomposition of $h$ has a pseudo-Anosov component. Then $R_\lambda$ has positive entropy and infinitely many periodic orbits.
\end{cor}

\begin{cor}\label{cor: 2infty}
Let $R_\lambda$ be a Reeb vector field on a closed 3-manifold $M$ that is supported by a rational open book decomposition. Then $R_\lambda$ has two or infinitely many periodic orbits.
\end{cor}
 
\begin{proof}
If $S$ is a disk or an annulus the conclusion follows by Proposition 5.1 of \cite{CHP}: there are either 2 or infinitely many periodic orbits. 
In all other cases, Theorem~\ref{thm: periodic} provides infinitely many periodic orbits.
\end{proof}

\subsection{Heteroclinic and homoclinic intersections}\label{ssec-int}

In this section we study the hyperbolic orbits in the broken components of the binding of a supporting broken book, combined with the Reeb property of the flow. After completing this work, we realized that some arguments in this section were also used in~\cite{HWZ}.

For $k\in K_b$, a \emph{branch of the unstable manifold of~$k$} or \emph{unstable branch}, denoted by~$V^u(k)$, is a connected component of the unstable manifold of~$k$ cutted along $k$. 
It coincides with the unstable manifold minus~$k$ when $k$ is negative hyperbolic, but it consists of half of it when $k$ is positive hyperbolic. 
In the same way we use the notation $V^s(k)$ for a branch of the stable manifold of $k$.

\begin{lemma}\label{lemma: hyperbolic} Let $R_\lambda$ be a Reeb vector field for a contact form $\lambda$ carried by a broken book decomposition $(K,\F)$. 
Then, for every $k_0\in K_b$, every unstable branch $V^u(k_0)$ (\emph{resp.} stable branch $V^s(k_0)$) contains a homo/heteroclinic orbit asymptotic at $+\infty$ to a broken component of $K_b$. 
\end{lemma}

\begin{proof} 
An unstable branch $V^u(k_0)$ is made of an $\mathbb{S}^1$-family of orbits of $R_\lambda$, asymptotic to $k_0$ at $-\infty$. Each $\mathbb{S}^1$-family of orbits is a cylinder in $M$ with a boundary component in $k_0$, that is injectively immersed in $M$ since its portion near~$k_0$ for time $t<-T$, with $T$ large enough, is embedded.

We now argue by contradiction. 
Consider the finite collection of all the rigid pages $\mathcal{R} =\{ S_0 ,\dots , S_k\}$ of the broken book decomposition (see Definition~\ref{defn-rigid}). 
If no orbit in $V^u(k_0)$ limits to a broken component of $K_b$ at $+\infty$, then $V^u(k_0)$ has a return map on $\mathcal{R}$ which is well-defined. 
Then all the connected components of $V^u(k_0)\cap \mathcal{R}$ are compact and there are infinitely many of them, hence there is a rigid page, say~$S_0$, such that $V^u(k_0)\cap S_0$ has infinitely many connected components. 
Since $V^u(k_0)$ is an injectively immersed cylinder, the intersection with $S_0$ forms an infinite embedded collection~$C_0$ of closed curves in~$S_0$. 

Observe that $d\lambda$ is an area form on $S_0$. 
We claim that only a finite number of the curves in $C_0$ can be contractible in $S_0$. 
Two contractible components of~$C_0$ bound disks $D$ and $D'$ in $S_0$, these disks have the same $d\lambda$-area. 
Indeed~$D$ and $D'$ can be completed by an annular piece $A \subset V^u(k_0) $ tangent to $R_\lambda$ to form a sphere. 
Applying Stokes' theorem we obtain
\begin{equation}\label{eq-stokesdisks}
0= \int_{D\cup A\cup D'} d\lambda =\int_D d\lambda +\int_{A} d\lambda -\int_{D'} d\lambda = \int_D d\lambda -\int_{D'} d\lambda,
\end{equation}
because $d\lambda$ vanishes along $A$. 
Note that Equation~\eqref{eq-stokesdisks} also implies that $D$ is disjoint from $D'$, since $\partial D$ is disjoint from $\partial D'$. 
Since the total area of~$S_0$ is bounded, there are only finitely many contractible curves in $C_0$, as we wanted to prove. 

Thus infinitely many components of $C_0$ are not contractible in $S_0$, so at least two have to cobound an annulus $A'$ in $S_0$. 
The annulus $A'$ is transverse to $R_\lambda$ and its boundary components cobound by construction an annulus $A'' \subset V^u(k_0) $. 
We now apply Stokes' theorem to the torus $ A\cup A'' $ and get
$$0=\int_{A'\cup A''}d\lambda=\int_{A'} d\lambda>0,$$
a contradiction.

Hence each unstable (\emph{resp.} stable) branch of $k_0$ contains an orbit that is forward (\emph{resp.} backward) asymptotic to a component of $K_b$.
\end{proof}

Remark that in Equation~\eqref{eq-stokesdisks} the $d\lambda$-area of the disc $D$ is equal to the action of the periodic orbit $k_0$. 

We point out a consequence of the previous proof.

\begin{cor}\label{cor: hyperbolic}
Let $(K,\F)$ be a broken book decomposition with $K_b\neq \emptyset$ and $R_\lambda$ a Reeb vector field supported by $(K,\F)$. Then for every $k\in K_b$ and for every rigid page $S$, there is a natural number $n(k,S)$ that bounds the number of closed curves in the intersection of the stable/unstable branches of $k$ with~$S$. 
Similarly, every embedded cylinder longitudinally foliated by the flow and with one boundary component in $\mathcal{R}$ of action $\mathcal{A}>0$ intersects~$\mathcal{R}$ in less than a number of $n(\mathcal{A},\mathcal{R})$ circles. 
\end{cor}
The number $n(k,S)$ depends only on the action of $k$ and on the $d\lambda$-area and topology of the page $S$.

Recall that a heteroclinic orbit from~$k_0$ to~$k_1$ is an orbit contained in the unstable manifold of $k_0$ and in the stable manifold of $k_1$, that a {heteroclinic chain} based at~$k_0$ is a sequence of broken components $(k_0, k_1, \dots, k_{n-1},k_n)$ so that there is a heteroclinic orbit $\gamma_i$ from $k_i$ to $k_{i+1}$ for every $0\le i\le n{-}1$, and that a {homo/heteroclinic cycle} based at~$k_0$ is either a heteroclinic cycle based at~$k_0$ or the cycle~$(k_0, k_0)$ if there is a homoclinic orbit from~$k_0$ to~$k_0$. 

\begin{lemma}\label{lemma: cycle} 
There exists $k_0\in K_b$ such that 
\begin{itemize}
\item[$(a)$] either $k_0$ is positive hyperbolic and it has two homo/heteroclinic cycles $(k_0, k_1, \dots, ,k_{n-1},k_n=k_0)$ and $(k_0, k'_1, \dots, ,k'_{l-1},k'_l=k_0)$ based at~$k_0$ so that the orbits $\gamma_0$ and~$\gamma'_0$ are contained in different unstable branches of~$k_0$, 
\item[$(b)$] or $k_0$ is negative hyperbolic and it has one homo/heteroclinic cycle based at~$k_0$. 
\end{itemize}
\end{lemma}

\begin{proof} 
The argument is of graph-theoretical nature. 
Consider the directed graph~$G_b$ whose vertices are the broken components of the binding and such that there is one edge from $k_0$ to $k_1$ if there is a heteroclinic orbit between them. 
In particular $G_b$ may have loops (if there are homoclinic orbits) and double edges (if both unstable branches of~$k_0$ intersect one or both stable branches of~$k_1$). 

Recall that a strongly connected component of a directed graph is a subgraph where any two vertices may be connected by an oriented path and which is maximal with respect to this property. 
The quotient of the graph by the strongly connected components being acyclic and finite, there is a strongly connected component in~$G_b$ with no outgoing edge. 
We consider such a strongly connected component which is a sink, and denote it by~$G'_b$. 
Let $k_0$ be an orbit in~$G'_b$. 

If $k_0$ is a negative hyperbolic orbit, by Lemma~\ref{lemma: hyperbolic}, $k_0$ has at least one outgoing edge. 
Denote by~$k_1$ the end of this edge. 
If $k_1=k_0$ we are done. 
Otherwise, by strong connectivity there is a path in~$G'_b$ from $k_1$ to~$k_0$. 
Concatenating the edge $(k_0, k_1)$ with this path yields a heteroclinic cycle. 

If $k_0$ is positive hyperbolic, by Lemma~\ref{lemma: hyperbolic}, $k_0$ has at least two outgoing edges, corresponding to the two unstable branches of~$k_0$. 
Denote by $k_1$ and~$k_1'$ the respective ends of these two edges. 
Then by strong connectivity there are two paths in~$G'_b$ from $k_1$ to~$k_0$ and from $k_1'$ to~$k_0$ respectively. 
As before, concatenating the edges $(k_0, k_1)$ and $(k_0, k_1')$ with these paths yields the two desired homo/heteroclinic cycles. Observe that $k_1$ and $k_1'$ might be equal, but $\gamma_1$ and $\gamma_1'$ are different.
\end{proof}

\subsection{Theorem~\ref{thm: infinite} for strongly nondegenerate Reeb vector fields}\label{ssec-strongly}

We now prove Theorem~\ref{thm: infinite} when the Reeb vector field is {\it strongly nondegenerate}, that is when the invariant manifolds of the broken binding orbits intersect transversally. 
Then in Section~\ref{ssec-nondeg} we remove this transversality hypothesis and in Section~\ref{ssec-improving} we extend the proof to an open neighborhood. 

Observe that the strong nondegeneracy is a weaker hypothesis than being {\it Kupka-Smale}, since a Kupka-Smale vector field has in addition all its periodic orbits hyperbolic. 
The strongly nondegenerate condition is generic for vector fields due to \cite{Ku,Sm}. 
Another proof is due to Peixoto~\cite{Pe}. 
The latter extends to the Reeb context, so that strongly nondegenerate condition is also generic  for Reeb vector fields. 
It can also be derived from the results of Robinson \cite{Ro} which addresses the cases of symplectic diffeomorphisms and Hamiltonian vector fields.
All the needed perturbations are local and make no difference between area-preserving and flux-zero properties. 
Giroux's Lemma~\ref{lemma: fluxReeb} allows to convert local transverse area-preserving perturbations into Reeb perturbations. 

Theorem~\ref{thm: infinite} follows from the fact that if the broken book has broken components in its binding, then Lemma~\ref{lemma: cycle} implies that there are homo/heteroclinic cycles. 
The strongly nondegenerate hypothesis then gives a transverse homoclinic intersection, implying the existence of a Smale horseshoe. 
Theorem~\ref{thm: horseshoe} then implies that the flow has positive topological entropy and infinitely many periodic orbit.
 
If there are no broken components in the binding, the broken book is a rational open book. Then, Corollary~\ref{cor: 2infty} completes the proof of Theorem~\ref{thm: infinite} for strongly nondegenerate Reeb vector fields. 

\subsection{Theorems~\ref{thm: infinite} and \ref{thm: entropy} for nondegenerate Reeb vector fields}\label{ssec-nondeg}

We now prove Theorem~\ref{thm: infinite} in the case where we drop the strong nondegeneracy hypothesis, and obtain the result for nondegenerate Reeb vector fields.
Consider two hyperbolic periodic orbits (not necessarily different) with a homo/heteroclinic orbit connecting them. If the corresponding intersection of stable/unstable manifolds is not transverse then it is a connection, a crossing intersection or a one-sided intersection as in Definition~\ref{defn: intersections}.

After the proof in Section~\ref{ssec-strongly}, in order to prove Theorem~\ref{thm: infinite} for nondegenerate Reeb vector fields we need to consider only those supported by broken book decompositions with $K_b\neq \emptyset$. 
By Lemma~\ref{lemma: cycle} there is a heteroclinic cycle and as discussed in Section~\ref{ssec: horseshoe}, if at least one of the intersections in this cycle is a crossing intersection, then the vector field has positive topological entropy and infinitely many periodic orbits. 
Hence we consider the cases in which all the intersections are either connections or one-sided. 
We treat differently homo/heteroclinic connections and one-sided intersections because in the Reeb context, a homo/heteroclinic connection cannot be eliminated by a local perturbation of the Reeb vector field: 
one cannot displace a transverse circle from itself with a flux-zero map close to the identity.  

We first prove Theorem~\ref{thm: infinite} when all the homoclinic and heteroclinic intersections between the orbits of $K_b$ are connections (Lemma~\ref{lemma: full}).
We then consider the case of an unstable manifold that has only one-sided intersections with the stable manifolds of the orbits in $K_b$. 
The idea is to destroy the intersections, one by one, by perturbing the Reeb vector field and obtain a new Reeb vector field supported by the same broken book decomposition. 
For any given number $L>0$, the new Reeb vector field has a periodic orbit $k\in K_b$ whose unstable manifold intersects $L$ times the set of rigid pages along circles. This contradicts Corollary~\ref{cor: hyperbolic}. 
This is done in Proposition~\ref{lemma: consecutive}. 
Before that, we order the intersections along a stable or unstable manifold.

\medskip

We start with the case when there are only complete connections.

\begin{lemma}\label{lemma: full} 
Let $(K,\mathcal{F})$ be a broken book decomposition with $K_b\neq \emptyset$ supporting a nondegenerate Reeb vector field $R_\lambda$.
Assume that all the homoclinic or heteroclinic intersections between orbits in $K_b$ are connections. Then $R_\lambda$ has either two or infinitely many different simple periodic orbits.
\end{lemma}

\begin{proof} The idea is to cut $M$ along the stable/unstable manifolds of the broken components of the binding to obtain a manifold with torus boundary. 

Let $M'$ be the metric completion of $M$ minus the stable/unstable manifolds of the broken components of the binding. 
Then $M'$ is a 3-manifold with boundary and corners, possibly disconnected.

The boundary of $M'$ is made of copies of the stable and unstable manifolds of orbits in $K_b$ and has corners along the copies of orbits of $K_b$.
The Reeb vector field is tangent to the boundary, while the foliation $\F$ is now transverse to the boundary. The foliation $\mathcal{F}$ has two types of singularities, both along circles: it is singular along the radial components in $K_r$ and along the corners of $M'$ that corresponded to quadrants of a hyperbolic orbit in which $\mathcal{F}$ had a radial sector with nonempty interior. 
This configuration implies that (each connected component of) $M'\setminus K_r$ fibers over $\mathbb{S}^1$ and that the Reeb vector field has a first-return map defined on the interior of each one of these fibers. 

Let $N$ be a connected component of $M'$. If the fibers are neither disks, nor annuli, we obtain the existence of infinitely many periodic orbits using Theorem~\ref{thm: periodic}. Thus assume that in each connected component of $M'$ the fibers are either disks or annuli. 
Then the regular pages of $(K, \F)$ are disks or annuli and, by Lemma \ref{lemma: radial nonempty}, $K_r$ is nonempty.

A component $N$ containing an element of $K_r$ cannot have disk pages. 
So there is a component $N$ of $M'$ where all pages are annuli, having a boundary component on $k_r\in K_r$ contained in the interior of $N$ and the other one on the boundary of $N$. Note that this implies that $N$ is a solid torus. The boundary of $N$ is decomposed into the annuli given by the homoclinic or heteroclinic connections, each annulus being bounded by two (not necessarily different) components of $K_b$. 

The orbits of $R_\lambda$ define a 1-foliation on each annulus. Observe that no annulus is foliated by Reeb components of $R_\lambda$, since $R_\lambda$ is geodesible~\cite{Sullivan}: simply here, in the case of a Reeb component $d\lambda$ would be zero on the annulus, while the integral of $\lambda$ would be nonzero on the boundary, in contradiction with Stokes' theorem. Hence, the periodic orbits in the 2-torus~$\partial N$ turn in the same direction, are attracting on one side and repelling on the other side since we are alternately passing from a stable manifold to an unstable manifold. 

We now claim that we can change the fibration of the interior $N\setminus k_r$ by another fibration by annuli, close to the previous one (in terms of their tangent plane fields), so that it is still transverse to $R_\lambda$ in the interior and on the boundary. Indeed, outside of a neighborhood of $\partial N$, the Reeb vector field is away from the tangent plane field of the fibration by a fixed factor, in particular near $k_r$. 
Near $\partial N$, the Reeb vector field gets close to the tangent plane field of the fibration and is tangent to it along $K_b\cap \partial N$, but with a fixed direction since there are no Reeb components for the flow restricted to the boundary. Thus we can tilt the fibration in the other direction to make it everywhere transverse. This operation changes the slope by which the fibration approaches $k_r$ and the boundary $\partial N$.

Since the fibers are now everywhere transverse to $R_\lambda$, there is a well-defined first-return map that extends to the boundary to give an area-preserving homeomorphism of a closed annulus. Observe that this annulus is not necessarily embedded along $k_r\cup K_b$, but the map is well-defined. 
The boundary of any such annular page intersects at least one component of $K_b$, so that the first-return map to this annulus has at least one periodic point in the boundary. 
A theorem of Franks implies that there are infinitely many periodic points~\cite[Theorem~3.5]{Fra}.
\end{proof}

We now prove that an unstable manifold of an orbit in $K_b$ that does not coincide with the stable manifold of some orbit of $K_b$ must have a crossing intersection with some unstable manifold of an orbit in $K_b$.
 In order to prove such a statement we will destroy, one by one, one-sided intersections without creating new ones, by perturbing the Reeb vector field. To do so, we first have to order the intersections. 
Consider an orbit segment $\gamma$ that is not an entire periodic orbit. Thus $\gamma$ has no self-intersection and we say that $\gamma$ is a {\it simple orbit segment}. 

\begin{defn}\label{defn: length}
The \emph{length} of a simple orbit segment $\gamma$ is the number of components of $\gamma \setminus \mathcal{R}$, where $\mathcal{R}$ denotes the set of rigid pages. 
The length of a heteroclinic chain is the sum of the length of its components.
\end{defn}

Observe that the length of an orbit of the binding~$K$ is zero. 
The length of a full heteroclinic or homoclinic orbit is bounded, hence in $V^u(k)$ and~$V^s(k)$ for $k\in K_b$, the heteroclinic and homoclinic orbits are partially ordered. 

We also want to consider convergence of sequences of orbits.
For that we consider a small neighborhood $N(K_b)$ of $K_b$, made of the disjoint union of neighborhoods $N(k)$ of each $k\in K_b$. These neighborhoods are taken to be standard Morse type neighborhoods, as in Figure~\ref{figure: foliation}. 
Hence any orbit that enters and exits~$N(k)$ has to intersect at least one rigid page inside $N(k)$. 
If~$\gamma$ is a simple orbit segment, set $\hat{\gamma}=\gamma\setminus N(K_b)$ that is a collection of simple orbit segments. 

The first part of the following lemma is a tautology following from the definition of the length, and the second part follows by compactness.

\begin{lemma}\label{lemma: finiteness} Every simple orbit segment $\gamma$ of length equal to $L>0$ intersects the set of rigid pages $\mathcal{R}$ exactly $L-1$ times. 

For every $L>0$, there exists $N>0$ such that if $\gamma$ is a simple orbit segment of length bounded by $L$, then the action of $\hat{\gamma}$ is bounded from above by $N$. 
\end{lemma}

Next observe

\begin{lemma}[Compactness lemma]\label{lemma: compactification} Given $L>0$, the set of homo/heteroclinic orbits and simple orbit segments of length less than $L$ admits a natural compactification by chains of heteroclinic orbits and simple orbit segments whose length is at most~$L$.
\end{lemma}

\begin{proof}
Let $(\gamma_n)$ be a sequence of homo/heteroclinic orbits of length bounded by~$L$. 
Then every orbit $\gamma_n$ passes through less than $L$ components of $N(K_b)$ and we can extract a subsequence such that the orbits in the subsequence have the same pattern of crossings with $N(K_b)$. 
Then the collection of orbit segments $\hat{\gamma_n}$ 
have bounded action by Lemma \ref{lemma: finiteness} and are, up to extracting a subsequence, converging to a collection of orbit segments. 
Inside a component $N(k_1)$ of $N(K_b)$, the sequence $\gamma_n\cap N(k_1)$ has either bounded or unbounded action. 

In the first case, there is a subsequence that converges to a simple orbit segment in $N(k_1)\setminus k_1$. 

In the second case, there is a subsequence that converges to the concatenation of an orbit segment in $V^s(k_1)$, followed by~$k_1$ and then by an orbit segment in $V^u(k_1)$. 
Thus a subsequence of $(\gamma_n)$ converges to a heteroclinic chain. It is then immediate that the chain has length less than $L$ and Lemma~\ref{lemma: compactification} follows.

The proof for sequences of orbit segments is analogous.
\end{proof}

We can now prove the main result for one-sided intersections.

\begin{prop}\label{lemma: consecutive} Let $(K,\mathcal{F})$ be a broken book decomposition supporting a nondegenerate Reeb vector field $R_\lambda$. 
If an unstable branch $V^u(k)$ of some orbit $k \in K_b$ does not coincide with a stable manifold of an orbit in~$K_b$, then it contains a crossing intersection.
\end{prop}

\begin{proof} The proof of this result is not straightforward and will involve proving Lemma~\ref{lemma: eliminate}. 
The proof of Lemma~\ref{lemma: eliminate} uses Lemmas ~\ref{lemma: spiralling}, \ref{lemma: A} and~\ref{lemma: B}. 

We know by Lemma~\ref{lemma: hyperbolic} that $V^u(k)$ must intersect stable manifolds of periodic orbits in $K_b$.
We argue by contradiction and assume that $V^u(k)$ has no crossing intersection. Then $V^u(k)$ must contain only one-sided intersections. 
 
Recall that $V^u(k)$ is a cylinder bounded on one side by $k$ and which is foliated by orbits of~$R_\lambda$. 
We travel along $V^u(k)$, starting from~$k$. 
 
Consider the set $\mathcal{R}$ of rigid pages of the broken book decomposition. Then~$M\setminus \mathcal{R}$ is formed of product-type components. 
Since $R_\lambda$ is transverse to~$\mathcal{R}$, after leaving $k$, the unstable branch $V^u(k)$ enters successively some of these components along core circles $C_1, C_2, \dots$ of the cylinder $V^u(k)$, see Figure~\ref{figure: connectioncircles1}. 
Note that these entering circles are indeed ordered by the flow in $V^u(k)$. 
Also it exits each component along a circle that is an entering circle of the next component. 
This happens until the cylinder~$V^u(k)$ enters a component~$P$, along a circle $C_P$, that contains in its boundary an orbit~$k'\in K_b$ such that there is an orbit of $R_\lambda$ of a point in $C_P$ asymptotic to $k'$ that stays in~$P$. 
This means in particular that $V^u(k)$ and $V^s(k')$ intersect. 

\begin{figure}[ht]
\centering
\begin{picture}(75,62)(0,0)
\put(0,0){\includegraphics[width=.55\textwidth]{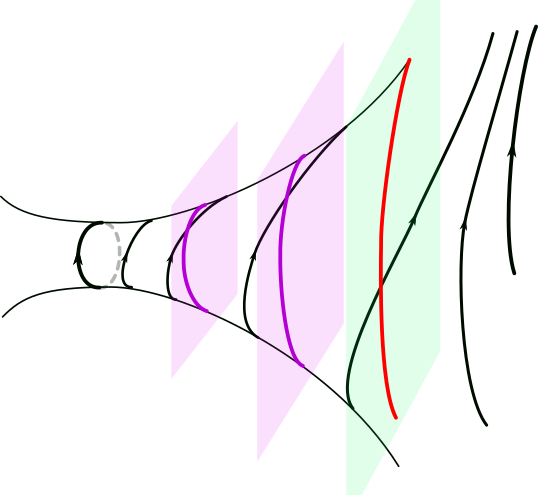}}
\put(7,30){$k$}
\put(67,40){$k'$}
\put(25,26){$C_1$}
\put(38,20){$C_P$}
\put(50,18){$C(k)$}
\put(27,40){$\mathcal{R}$}
\put(40,49){$\mathcal{R}$}
\put(53,59){$S$}
\end{picture}
\caption{
The unstable branch~$V^u(k)$ of some broken orbit, in case it coincides with no stable manifold (Proposition~\ref{lemma: consecutive}). 
The orbits of~$R_\lambda$ escape from~$k$. When travelling away from~$k$ one meets some rigid pages (purple) along circles $C_1, C_2, \dots$. 
Since there are connections in~$V^u(k)$, there is an orbit in the stable manifold~$V^s(k')$ for some broken orbit~$k'$. 
The circle $C_P$ is the last circle  in $V^u(k)\cap \mathcal{R}$. 
The region at the right of~$C_P$ is denoted by~$P$.
The surface $S$ is a regular page (green) of the broken book intersecting~$V^u(k)$ in~$P$, along a circle~$C(k)$ (red). 
}
\label{figure: connectioncircles1}
\end{figure}

Before arriving near~$k'$, the intersections of~$V^u(k)$ with the regular pages of $(K,\mathcal{F})$ also occur along circles. 
We pick a regular page~$S$ of~$(K,\mathcal{F})$ in~$P$. 
Then the 1-manifold $V^u(k) \cap S$ contains an embedded circle $C(k)$---the one after the last entrance circle $C_P$ in $P$---and $V^s (k')\cap S$ contains another embedded circle $C(k')$---the first intersection of $V^s(k')$ with~$S$ when flowing backward from~$k'$---so that the circles $C(k)$ and~$C(k')$ intersect along a nonempty compact set $\Delta$, containing only one-sided intersections. Indeed every point of $\Delta$ is located on an heteroclinic intersection from $k$ to $k'$, all of those being one-sided by assumption. 
The one-sided intersections can be on one side of~$C(k)$ or on the other, thus we further decompose $\Delta$ as the disjoint union of two compact sets $\Delta_+$ and $\Delta_-$, depending on the side of tangency, see Figure~\ref{figure: connectioncircles2}.
They could {\it a priori} be both nonempty. 
 
The circle~$C(k')$  is contained in a  branch of~$V^s(k')$ and the side $\Delta_+$ further determines a quadrant~$Q_+$ of $k'$ in which~$C(k)$ lies near~$\Delta_+$.

The idea of the rest of the proof is to destroy, inductively, the one-sided intersections of $V^u(k)$, starting from those passing through $\Delta$ and to find a new Reeb vector field supported by the same broken book decomposition, but such that $V^u(k)$ does not contain any heteroclinic orbit up to a certain {length}. 
The important consequence of Corollary~\ref{cor: hyperbolic}  is that the length up to which we need to eliminate the intersections is {\it a priori} bounded uniformly with respect to the features of the rigid pages (genus and area).  Let $L$ be a length that provides a contradiction to Corollary~\ref{cor: hyperbolic}.

\begin{lemma}\label{lemma: eliminate} If the component $V^u (k')$ is not a complete connection, i.e. it does not coincide with the stable manifold of an orbit in~$K_b$, we can slightly modify $R_\lambda$ in order to eliminate $\Delta_+$ and $\Delta_-$, without creating extra intersections of~$V^u(k)$ of length less than or equal to $L$.
\end{lemma}
\begin{proof}[Proof of Lemma \ref{lemma: eliminate}]

\begin{figure}[ht]
\centering
\begin{picture}(100,38)(0,0)
\put(0,0){\includegraphics[width=.8\textwidth]{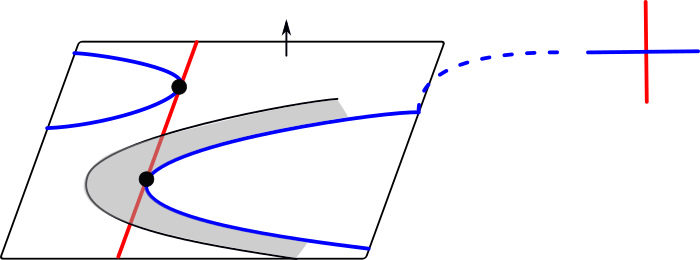}}
\put(22,34){$C(k)$}
\put(-3,19){$C(k')$}
\put(54,0){$C(k')$}
\put(27,24){$\Delta_-$}
\put(23,10){$\Delta_+$}
\put(17,11){$\mathcal{B}$}
\put(49,22){$A$}
\put(87,32){$Q_+$}
\put(43,33){$R_\lambda$}
\put(94,26){$k'$}
\put(68,26){$V^s(k')$}
\put(93.5,37){$V^u(k')$}
\end{picture}
\caption{
The surface~$S$ contains circles $C(k)=V^u(k)\cap S$ (red) and $C(k')=V^s(k')\cap S$ (blue), which are tangent to one another. 
Depending on the relative position at these contact points, they belong to~$\Delta_+$ or~$\Delta_-$. 
$\Delta_+$ determines a quadrant~$Q_+$ at~$k'$, as well as an unstable branch of~$V^u(k')$. 
An annulus~$A$ (grey) in~$Q_+\cap S$ is also shown. }
\label{figure: connectioncircles2}
\end{figure}

We now explain how to eliminate the intersections from $\Delta_+$. 
Recall that they determine near $k'$ a quadrant~$Q_+$ delimited by the branch of~$V^s(k')$ containing $\Delta_+$ and an unstable branch of~$V^u (k')$.

Let us first observe the following important property.

\begin{lemma}\label{lemma: spiralling} Every intersection from the half-unstable manifold $V^u(k')$ and any $V^s(k'')$, with $k'' \in K_b$, is non-crossing or complete, 
and on the other side of $Q_+$ (i.e. locally $V^s(k'')$ does not meet $Q_+$).
\end{lemma}
\begin{proof} We consider an annulus $A'$ transverse to $R_\lambda$ and which is a thickening of an essential curve in $V^u(k')$ near $k'$. 
If there was either a crossing intersection or a non-crossing intersection on the side of $Q^+$ between the half-unstable manifold $V^u(k')$ and some $V^s(k'')$, with $k'' \in K_b$, then $A' \cap Q_+ \cap V^s(k'')$ would contain an arc $\delta$ anchored in $V^u(k')$.
The half-infinite strip swept by the backward flow of $\delta$ would intersect $S$ into a set containing a curve spiraling to $C(k')$ on the side of $Q_+$, and thus there would be crossing intersections between $V^u(k)$ and $V^s(k'')$, contradicting the hypothesis on~$V^u(k)$.
\end{proof}

More generally, if $A'$ is an embedded annulus transverse to $R_\lambda$ which is a thickening of an essential closed curve $a$ in $V^s(k')$ near some $k' \in K_b$,
we say that some $V^s (k'')$, $k''\in K_b$, {\it spirals to $V^s(k')$} if $V^s(k'')\cap A'$ contains a sequence of arcs $a_n$ that is uniformly converging to the universal cover $\pi :\hat{a}\to a$ in $C^1_{loc}$-norm. This definition does not depend on the choice of $A'$.

Let $A=C(k') \times [0,\eta] \subset S$ be a small embedded annulus with $C(k')=C(k')\times \{0\}$, situated on the side of $Q_+$ in~$S$, see Figure~\ref{figure: connectioncircles2}.

\begin{lemma}\label{lemma: A} If $A$ is small enough, then its interior $C(k')\times (0,\eta)$ does not intersect any stable manifold of length at most~$L$ of any orbit $k''\in K_b$.
\end{lemma}
\begin{proof} Assume for a contradiction that for any~$A$ there is an intersection with a stable manifold and whose length is at most~$L$. 
Then we find a sequence $(x_n)$ of points in $A$ approaching $C(k')$, so that their orbits $(\gamma_n)$ are of length at most~$L$ and are asymptotic at $+\infty$ to elements in~$K_b$.
Using the Compactness Lemma~\ref{lemma: compactification}, up to extracting a subsequence, we can assume that $(x_n)$ converges to a point $x \in C(k')$ and $(\gamma_n)$ converges to a sequence of orbits $\delta_0,\dots,\delta_l$, where $\delta_0$ is a half-infinite orbit from $x$ to~$k'=k_0$, and, for $1\leq i\leq l$, $\delta_i$ is a heteroclinic orbit from~$k_{i-1} \in K_b$ to~$k_i \in K_b$.
Iterating the argument of the proof of Lemma \ref{lemma: spiralling}, we see that, following the sequence of connections by the backward flow from $k_l$ at each connection, for $i\leq l-1$, a portion of $V^s(k_l)$ spirals to the component $V^s(k_i)$ containing $\delta_i$ from the side containing $\gamma_n$ (for $n$ large enough). 
Note here that for the propagation of the spiraling property, there are two slightly different cases that can occur:
$V^s(k_l)$ spirals to the component $V^s(k_{i+1})$ and either $V^s(k_l)$ intersects $V^u(k_i)$ in which case it spirals to $V^s(k_i)$ by the proof of Lemma \ref{lemma: spiralling}, or it does not intersect $V^u(k_i)$ (because it spirals on the side of $V^s(k_{i+1})$ that is not intersecting $V^u(k_i)$ near $\delta_{i+1}$), but still since $V^s(k_{i+1})$ spirals to $V^s(k_i)$, so does $V^s(k_l)$ which spirals to $V^s(k_{i+1})$.

Thus finally a portion of $V^s(k_l)$ spirals to $V^s(k')$ from the side of $A$ and there are crossing intersections of $V^u(k)$ with $V^s(k_l)$.
\end{proof}

Thus, if $A$ is small enough, there is a collection of arcs in $C(k')\times \{\eta\}$ and a collection of arcs in $C(k)$ containing $\Delta_+$ that cobound a family~$\mathcal{B}$ of bigons in $A$ whose interior does not intersect any stable manifold of length at most~$L$ of any orbit $k''\in K_b$, see Figure~\ref{figure: connectioncircles2}.

\begin{lemma}\label{lemma: B} There exists a neighborhood $U$ of $\Delta_+$ in $S$ such that there is no orbit of length at most~$L$ from $U\cap \mathrm{int}(A)$
to the components of $U\setminus \mathcal{B}$.
\end{lemma}
\begin{proof} Arguing by contradiction, we can find as in the proof of Lemma~\ref{lemma: A} a sequence $(\gamma_n)$ of orbits of length at most~$L$, starting from $U\cap \mathrm{int}(A)$ and ending outside of $\mathcal{B}$, that converges to a sequence of orbits $\delta_0,\dots ,\delta_l$ such that~$\delta_0$ is an half-infinite orbit starting from a point $x$ of $\Delta_+$ and going to~$k'=k_0$; $\delta_i$ for $1\leq i\leq l-1$ is a heteroclinic orbit from $k_{i-1} \in K_b$ to $k_i\in K_b$; and~$\delta_l$ is a half-infinite orbit from $k$ to a point $y$ in $\Delta_+$.
With the same reasoning than in Lemma \ref{lemma: spiralling} and Lemma \ref{lemma: A}, we see that the backward flow of $C(k')\cap U$ has to come back spiraling along $C(k')$ on the side of $A$, thus creating crossing intersections with $V^u(k)$, a contradiction.
\end{proof}

The proof of Lemma \ref{lemma: eliminate} follows from the combination of Lemmas \ref{lemma: A} and \ref{lemma: B}.
First we choose the neighborhood $U$ from Lemma \ref{lemma: B} that does not intersect $\Delta_-$.

We consider a flux-zero area-preserving diffeomorphism $\psi$ supported in~$U$ that pushes $C(k)\cap U$ inside $(C(k')\times (0,\eta])\cap U$.
The modification of $R_\lambda$ 
will be done in a positive-flow-box product thickening of $U$ by modifying the direction of $R_\lambda$ so that the circle $C(k)$ entering the thickening of $U$ will be mapped to its image by $\psi$ when exiting, using Giroux' realization Lemma~\ref{lemma: fluxReeb}. 
This locally eliminates $\Delta_+$. Now, thanks to Lemmas \ref{lemma: A} and~\ref{lemma: B}, we see that we do not create extra connections of length $\leq L$ for~$k$. Indeed the modified part of $C(k)$ has no connections of length $\leq L$ that are not passing again in $U$ by Lemma \ref{lemma: A}; moreover they are not passing again through~$U\setminus \mathcal{B}$ by Lemma \ref{lemma: B}. If they enter again through $\mathcal{B}$ then they exit in $\mathrm{int}(A)\cap U$ and the same reasoning applies again.

We can apply the same argument to eliminate $\Delta_-$.
 \end{proof}

\medskip

Back to the proof of Proposition~\ref{lemma: consecutive}, we are left with the case in which the unstable component $V^u(k')$ is a complete connection to an orbit $k''$. 
We repeat the argument of Lemma~\ref{lemma: eliminate}: either the corresponding unstable manifold of $k''$ is a complete connection, or we can eliminate $\Delta_+$. The sequence of successive complete connections we are following is dictated at each step by the side which contains $\Delta_+$ and they all thus have a natural coorientation. In particular, we cannot use twice the same connection, since otherwise the corresponding cycle of complete connections would have, by tracing it back, to go through $V^s(k')$, which is not a complete connection, a contradiction. 
Since there are finitely many complete connections, this process stops and we can always eliminate $\Delta_\pm$.
Arguing by induction, we eliminate successively all intersections from $k$ of length less or equal to $L$, without creating new ones, and obtain a contradiction to Corollary~\ref{cor: hyperbolic} applied to the modified Reeb vector field for the unstable manifold $V^u(k)$. This terminates the proof of Proposition~\ref{lemma: consecutive}.
\end{proof}

We can now prove Theorems~\ref{thm: infinite} and \ref{thm: entropy} for nondegenerate Reeb flows. In view of Lemma~\ref{lemma: full} and Proposition~\ref{lemma: consecutive}, to prove Theorem~\ref{thm: infinite} we need to consider the case when there is at least one crossing intersection between the components of $K_b$. 

\begin{lemma}\label{lemma: cross} Let $(K,\mathcal{F})$ be a broken book decomposition supporting a nondegenerate Reeb vector field $R_\lambda$.
 Assume that there is at least one homo/heteroclinic intersection between broken components of the binding~$K_b$, with a crossing of stable and unstable manifolds. Then there is at least one homoclinic intersection with a crossing of stable and unstable manifolds, and thus positive topological entropy and infinitely many periodic orbits.
\end{lemma}

\begin{proof} If we have a homoclinic intersection with a crossing, as in the hypothesis, we are done. 
So assume that we have a heteroclinic intersection. 
We consider the set $\mathcal{C}$ of complete connections between components of $K_b$. As before cut $M$ along $\mathcal{C}$ to get a manifold $M'$ with boundary and corners. 
We let $K_b'$ be the collection of periodic orbits of $K_b$, when viewed in $M'$. 
The set $K_b'$ may contain several copies of the same orbit of $K_b$.

By hypothesis there is at least one heteroclinic orbit in $M'$ between elements of $K_b'$ along which there is a crossing intersection. 
Hence it is not in~$\partial M'$.
 Note that for every component $T$ of $\partial M'$,  the  total  number of stable and unstable manifolds of orbits $k_b \in T \subset \partial M'$  that are not themselves contained in $T$ is even, since there are as many stable than unstable manifolds in $T$ (every heteroclinic or homoclinic connection in  $T$  involves a stable and an unstable manifold).

Consider  a  connected component  $N$ of $M'$ that contains a crossing intersection, and let $k\in K_b$ be the orbit whose unstable manifold $V^u(k)$ is involved in this intersection. 
Following the heteroclinic intersections from~$V^u(k)$, as in Lemma~\ref{lemma: cycle}, we get a sequence of heteroclinic intersections. We claim that this sequence stays inside~$N$. 
Indeed, if it arrives to a periodic orbit in $K_b\cap \partial N$  along a stable manifold, then the two components of the unstable manifold of this periodic orbit are in $N$. 
We can thus construct a sequence such that all the stable and unstable manifolds involved are in~$N$. 
To see this, one can collapse  every component of $\partial N$ \color{black} to a generalized hyperbolic orbit with $2n$ stable/unstable manifolds:  we identify all the periodic orbits in a connected component of $\partial N$ and get rid of the stable and unstable branches in this boundary component. 
This is doable since there are no Reeb components for the Reeb flow on the boundary. 
We now have a closed manifold together with a Reeb vector field without connections. Lemma~\ref{lemma: hyperbolic} and Proposition~\ref{lemma: consecutive} imply that there is a cycle with only crossing intersections.
 
Near this cycle, we obtain a crossing homoclinic intersection, which is also a homoclinic intersection in $M$.
Positivity of topological entropy and the existence of infinitely many different periodic orbits comes from \cite[Theorem 2.2]{BW}.
\end{proof}

We now prove Theorem~\ref{thm: entropy}, for nondegenerate Reeb vector fields, stating that on a $3$-manifold that is not graphed, every nondegenerate Reeb vector field has positive topological entropy.

\begin{proof}[Proof of Theorem~\ref{thm: entropy} for nondegenerate Reeb flows] A nondegenerate Reeb vector field is carried by some broken book decomposition. If there is no broken component in the binding, then the broken book is in fact a rational open book. 
If~$M$ is not a graph manifold, then the monodromy of this rational open book must contain a pseudo-Anosov component in its Nielsen-Thurston decomposition. 
The first-return map of the Reeb vector field on a page is homotopic to the Nielsen-Thurston monodromy, so its topological entropy is bounded from below by the latter one, that is positive \cite[Expos\'e 10, IV]{FLP}.

If the binding of the broken book has broken components then all elements of $K_b$ that do not belong to complete connections contain, by Proposition~\ref{lemma: consecutive}, a crossing intersection. 
This proves the positivity of the entropy in this case.

If all stable and unstable manifolds of elements of $K_b$ are complete connections, then as in Lemma~\ref{lemma: full}, they decompose $M$ into partial open books and if $M$ is not graphed, then one of them must have some pseudo-Anosov monodromy piece in its Nielsen-Thurston decomposition and we obtain positive topological entropy.
\end{proof}

We end this section with a remark.
Note that the proof of Theorem~\ref{thm: periodic} gives, if $h$ is nondegenerate, the existence of positive hyperbolic periodic orbits, which are odd degree generators of cylindrical  contact  homology coming from the positive hyperbolic generators in the Morse-Bott families. 
Thus we are able to answer positively Question~1.8 of~\cite{CHP}:
\begin{cor}\label{cor: CHP} If $M$ is not $\mathbb{S}^3$ nor a lens space and if $R_\lambda$ is a strongly nondegenerate Reeb vector field on $M$, then it has a positive hyperbolic periodic orbit.
\end{cor}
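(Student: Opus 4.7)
The plan is to invoke Theorem~\ref{thm: main} to equip $R_\lambda$ with a supporting broken book decomposition $(K,\F)$, and then split according to whether the broken part $K_b$ of the binding is empty.

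If $K_b=\emptyset$, the decomposition is a genuine rational open book. Since the only rational open books with disk or annulus pages live on $\mathbb{S}^3$ or a lens space, the assumption on $M$ forces the page $S$ to have strictly greater complexity. The first-return map $h$ on $S$ is then a nondegenerate zero-flux area-preserving diffeomorphism, and Theorem~\ref{thm: periodic} produces infinitely many periodic Reeb orbits. The note following the proof of Theorem~\ref{thm: periodic} refines this to the existence of positive (even) hyperbolic orbits: they appear as the odd-degree generators of cylindrical contact homology, originating either from the periodic orbits of a pseudo-Anosov piece of the Nielsen--Thurston decomposition of $h$, or from the Morse--Bott perturbation of the $\mathbb{S}^1$-families of Reeb orbits supported in the suspension of a non-boundary-parallel Dehn twist component. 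Invoking that remark closes this case; the main content is the Conley--Zehnder grading identification recognizing those generators as positive hyperbolic.

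If $K_b\neq\emptyset$, pick a broken component $k_0$. It is hyperbolic by the definition of a broken binding; if it is already positive hyperbolic we are done, so assume $k_0$ is negative. Lemma~\ref{lemma: cycle} then provides a heteroclinic cycle (possibly with self-loops) in $K_b$ involving $k_0$. The strong nondegeneracy assumption forces every stable/unstable manifold intersection along this cycle to be transverse, so the Smale--Birkhoff homoclinic theorem applies and produces a Smale horseshoe in a first-return map on a local transversal of the cycle. A horseshoe contains hyperbolic periodic orbits of arbitrarily large period with prescribed symbolic itinerary, and a sign analysis of the trace of the linearized return map (passing to even iterates if needed) exhibits orbits with positive real eigenvalues, that is, positive hyperbolic Reeb orbits of $R_\lambda$. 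The main obstacle here is precisely this parity refinement: one must argue that the horseshoe, a priori only a uniformly hyperbolic invariant set, carries orbits of both trace signs. This follows from the standard fact that a horseshoe for an orientation-preserving surface diffeomorphism contains fixed points of both orientation types as the period varies, so that not all horseshoe orbits can be negative hyperbolic.
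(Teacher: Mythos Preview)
Your argument is correct and rests on the same two ingredients as the paper's proof, but is organized differently. The paper argues by contradiction: if $R_\lambda$ had no positive hyperbolic orbit then its topological entropy would vanish (implicitly because positive entropy yields a horseshoe via Katok, and a horseshoe for an orientation-preserving surface return map contains positive hyperbolic periodic points---precisely the parity refinement you isolate in Case~2); strong nondegeneracy together with zero entropy then forbids homoclinics, so Theorem~\ref{thm: homoclinic} supplies a supporting rational open book, and the monodromy analysis around Theorem~\ref{thm: periodic} produces a positive hyperbolic orbit, a contradiction. You instead split directly on whether $K_b$ is empty and run the horseshoe-parity argument explicitly when $K_b\neq\emptyset$. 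Your route is more transparent and avoids invoking Theorem~\ref{thm: homoclinic}; the paper's route is shorter because it collapses your Case~2 entirely into the entropy step.

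One minor correction in your Case~1: the note following Theorem~\ref{thm: periodic} only extracts positive hyperbolic orbits from the Morse--Bott $\mathbb{S}^1$-families in the Dehn-twist components of the Nielsen--Thurston decomposition. When the monodromy has a pseudo-Anosov piece, the paper argues instead via a Nielsen class of negative total Lefschetz index (forcing an index~$-1$ fixed point), not via cylindrical contact homology. Alternatively, a pseudo-Anosov piece already gives positive entropy, so you may simply feed that case back into your own Case~2 horseshoe argument. Also, Lemma~\ref{lemma: cycle} produces a cycle based at \emph{some} element of $K_b$, not a prechosen one; but since your Case~2 hypothesis is that \emph{every} broken component is negative hyperbolic, this costs nothing.
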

Indeed, if there were none, $R_\lambda$ would have vanishing topological entropy~\cite{LS19} and would admit a supporting rational open book decomposition. 
Then Theorem~\ref{thm: periodic} would give at least one positive hyperbolic periodic orbit (among infinitely many other ones) in case every piece of~$h$ is periodic. In case there is a pseudo-Anosov piece, the existence of a Nielsen class with negative total Lefschetz index gives the same result and leads to a contradiction.

\subsection{Eliminating broken components}\label{ssec-other}

In this section we explore the possibility of obtaining a rational open book decomposition from a broken book decomposition. 
The results here can be applied to constructions by de Paulo and Salom\~ao \cite{dPS}, to obtain an open book decomposition. 
These results also give a proof of Theorem~\ref{thm: homoclinic}. 

In Lemma~\ref{lemma: cycle} we proved the existence of homo/heteroclinic cycles between the components of $K_b$. 
The interest of this result is that one could try to apply the local construction of Fried \cite{Fr} in the neighborhood of $(\cup_i k_i)\cup (\cup_i \gamma_i)\cup(\cup_i k_i')\cup (\cup_i \gamma_i')$ to get a surface of section $S_0$ that intersects transversally $k_0$ in its interior. 
We could then use $S_0$ to change the broken book decomposition and decrease the number of broken components of its binding. 
Iterating one would construct a supporting (up to orientations of the binding) rational open book. Unfortunately, Lemma~\ref{lemma: cycle} does not seem sufficient to make sure that $S_0$ intersects $k_0$, since the two heteroclinic cycles might involve twice the same stable or unstable branches (see Figures~\ref{figure: Nonos} and \ref{figure: Fried}).
However this works if there is only one broken component in the binding.

\begin{figure}[ht]
\centering
\includegraphics[width=.6\textwidth]{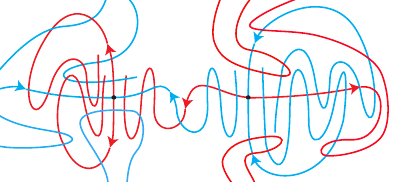}
\caption{Two hyperbolic orbits and their stable/unstable manifolds at which one cannot directly apply Fried's construction.}
\label{figure: Nonos}
\end{figure}

\begin{thm}\label{Fried}
 Let $R_\lambda$ be a strongly nondegenerate Reeb vector field for a contact form $\lambda$ carried by a broken book decomposition $(K,\F)$. Assume that $K$ contains at most one broken component. Then $R_\lambda$ has a Birkhoff section.
\end{thm}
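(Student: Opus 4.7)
The plan is to treat the cases $|K_b| = 0$ and $|K_b| = 1$ separately. If $K_b = \emptyset$ then by definition $(K,\F)$ is already a rational open book decomposition and any one of its pages is a Birkhoff section (in the orientation-relaxed sense).

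Assume now $K_b = \{k_0\}$. By Lemma~\ref{lemma: hyperbolic}, each stable and unstable half-manifold of $k_0$ carries an orbit asymptotic to a component of $K_b$, hence to $k_0$ itself; these are therefore homoclinic orbits of $k_0$. Lemma~\ref{lemma: cycle} then yields heteroclinic cycles based at $k_0$, which in the present situation consist of homoclinic orbits. Strong nondegeneracy provides the transverse intersection of the stable and unstable manifolds of $k_0$ needed below. Following the strategy of Remark~\ref{rmk: elimination}, the plan is to produce via Fried's construction~\cite{Fr} an $R_\lambda$-section $S_0$ intersecting $k_0$ transversely in its interior, and then to surger the broken book along $S_0$ to turn $k_0$ into a radial component of the binding.

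The main obstacle, as emphasized in Remark~\ref{rmk: elimination}, is ensuring this transverse crossing. When $k_0$ is negative hyperbolic, the $\pi$-monodromy of the broken local model identifies the four local hyperbolic sectors into two global ones placed in opposite position, so Fried's construction applied to $k_0$ together with the homoclinic cycle provided by Lemma~\ref{lemma: cycle} directly yields such a transverse $S_0$. When $k_0$ is positive hyperbolic, label the four hyperbolic sectors around $k_0$ as NE, NW, SE, SW according to the two unstable half-manifolds (east and west) and the two stable half-manifolds (north and south). Each homoclinic orbit of $k_0$ occupies exactly one such sector, and contributes to exactly one unstable and one stable half-manifold. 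By Lemma~\ref{lemma: hyperbolic} all four halves of $k_0$ must be visited by homoclinic orbits, and a short bipartite covering argument then shows that such a configuration cannot be supported by two sectors in adjacent position only (say NE and NW, which would leave the south half-manifolds uncovered). One can therefore select two homoclinic orbits $O, O'$ lying in diagonally opposite sectors. Fried's construction applied in a tubular neighborhood of $k_0 \cup O \cup O'$ then produces an embedded $R_\lambda$-section $S_0$ crossing $k_0$ transversely in its interior.

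Finally, splice $S_0$ with the rigid pages of $\F$ using the transverse desingularization procedure of Corollary~\ref{corollary: embedded} and Lemma~\ref{lemma: finiteness}. In the resulting local model near $k_0$, the four hyperbolic sectors collapse to a radial neighborhood, producing a new decomposition $(K', \F')$ with empty broken binding. This is a rational open book supporting $R_\lambda$ (without the orientation compatibility condition along the binding), and any one of its pages is the desired Birkhoff section.
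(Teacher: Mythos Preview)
Your argument follows the paper's approach: use Lemma~\ref{lemma: hyperbolic} and the bipartite covering to find homoclinic orbits in diagonally opposite configuration, build a local $R_\lambda$-section meeting $k_0$ transversely in its interior, and then desingularize with a page of~$\F$. The paper makes the middle step explicit rather than citing ``Fried's construction'' as a black box---it uses the transverse homoclinic intersections to locate horseshoe periodic points $p_a, p_b, p_{ab}, p_{ba}$ in the NE and SW quadrants and assembles $S_0$ concretely as a pair of pants bounded by the corresponding periodic orbits $k_a, k_b, k_{ab}$---but the overall strategy is the same.
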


\begin{proof}
Denote by~$k_0$ the broken component in the binding~$K$ and assume first that it is a positive hyperbolic periodic orbit. 
By the proof of Lemma~\ref{lemma: cycle}, each of the two branches of the unstable manifold of~$k_0$ intersect at least one stable branch of~$k_0$, and each of the two stable branches intersect at least one unstable branch. 
Therefore, up to a symmetry, there are two orbits $\gamma_{a}$ and $\gamma_{b}$ such that $\gamma_{a}$ belongs to both the west  unstable branch and the south  stable branch of~$k_0$, and $\gamma_{b}$ belongs to both the  east unstable manifold and the  north stable manifold of~$k_0$ (see Figure~\ref{figure: Fried}). 

Consider a small local transverse section~$D$ to~$R_\lambda$ at a point of $k_0$ and the induced first-return map~$f$. 
 
By taking a small transverse rectangle $r_W$ intersecting~$\gamma_a$ and considering its images by~$f$, the standard horseshoe construction~\cite[Section 6.5]{KH} implies that there is an iterate~$f^{k_W}$ such that $r_W$ and $f^{k_W}(r_W)$ intersect and the intersection contains a fixed point of $f^{r_W}$ that we denote by~$p_a$ (the pentagon on Figure~\ref{figure: Fried}).
Similarly there is a transverse rectangle $r_E$ intersecting~$\gamma_b$ and a suitable iterate~$f^{k_E}$ such that $f^{k_E}(r_E)$ intersects $r_E$ and the intersection contains a fixed point of $f^{k_E}$, denoted by~$p_b$. 
Denote by~$k_a$ and~$k_b$ the periodic orbits of~$R_\lambda$ corresponding to~$p_a$ and~$p_b$ respectively. 
More generally~\cite[Sections 1.9 and 6.5]{KH}, for every word~$w$ in the alphabet~$\{a, b\}$, one can find a periodic point~$p_w$ of~$f$ that follows $k_a$, and $k_b$ in the order given by~$w$. 
In particular there is periodic orbit~$k_{ab}$ that intersects~$D$ in two points~$p_{ab}$ and~$p_{ba}$ so that it remains close to~$\gamma_a$ for one period of~$k_a$ and close to~$\gamma_b$ for the next period of~$k_b$.

\begin{figure}[ht]
\centering
\begin{picture}(250,100)(0,0)
\put(0,0){\includegraphics[width=\textwidth]{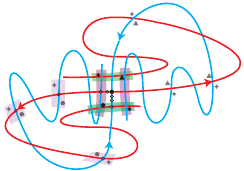}}
\put(52,30){$p_a$}
\put(50,38){$r_W$}
\put(25,35){$f(r_W)$}
\put(48,51.2){$p_{ab}$}
\put(68,29){$p_{ba}$}
\put(66,46){$p_{b}$}
\end{picture}
\caption{A transverse view of the orbit~$k_0$ (full black point in the center) and its stable/unstable manifolds (in blue/red respectively). 
The empty circle denote the orbit~$\gamma_a$ which sits at the intersection of the W-unstable branch and the S-stable branch of~$k_0$. 
Two small transverse rectangles $r_W$ and~$r_E$ in the W- and E-parts are shown in purple, together with their images by suitable iterates~$f^{k_W}$ and~$f^{k_E}$ of the first-return map~$f$, in green. 
In the intersection $r_W\cap f^{k_W}(r_W)$ lies a fixed point~$p_a$ of $f^{k_W}$, represented by a black pentagon, and its $f$-orbit is shown with grey pentagons. 
Similarly a fixed point~$p_b$ of $f^{k_E}$ is represented by a black triangle, and its $f$-orbit by grey triangles. 
Finally a fixed point $p_{ab}$ of $f^{k_W+k_E}$ is represented by a black 4-star, as well as its image $p_{ba}$ by~$f^{k_W}$. 
}
\label{figure: Fried}
\end{figure}

Now consider an arc connecting~$p_a$ to $p_{ab}$. 
When pushed by the flow, it describes a certain rectangle $R_1$ and comes back to an arc connecting~$p_a$ to~$p_{ba}$. 
Likewise an arc connecting~$p_b$ to~$p_{ba}$ describes a rectangle $R_2$ whose opposite side is an arc connecting $p_b$ to $p_{ab}$ (see Figure~\ref{figure: Fried3}).
Together these four arcs form a parallelogram~$P$ in~$D$ which contains $D\cap k_0$ in its interior. 
The union of $P$ and the two rectangles $R_1$ and $R_2$, forms an immersed topological pair of pants, which can be smoothed into a surface~$S_0$ transverse to~$R_\lambda$.
The main properties of~$S_0$ is that it is bounded by $k_a, k_b$ and $k_{ab}$, and it is transverse to~$k_0$. 

Now consider a page~$S$ of the foliation~$\F$ having $k_0$ in its boundary, with zero asymptotic linking number, take the union~$S_0\cup S$, and use the flow direction~$R_\lambda$ to desingularize the arcs and circles of intersection as in Lemma~\ref{lemma: desingularization} (see also Figure~\ref{figure: FriedBord}). 
The obtained surface intersects~ any small enough tubular neighborhood of $k_0$ along one meridian plus one or two longitudes. 
Therefore~$k_0$ is not anymore in the broken part of the binding, but is part of the boundary of the new surface. 
 Also the surface~$F_0$ intersects $k_a, k_b$, and~$k_{ab}$, so that these periodic orbits link positively the union~$F_0\cup S$. 
The resulting surface is then a genuine (rational) Birkhoff section for the Reeb vector field. We can thus obtain an open book decomposition from it adapted to the Reeb vector field. Observe that the orbits~$k_0, k_a, k_b$, and $k_{ab}$ are boundary components of radial type with respect to the new foliation.

The case where $k_0$ is a negative hyperbolic periodic orbit is treated in the same manner. The difference is that now one needs to consider the second iterate of the return map to a local transversal to the periodic orbit in order to have Figure~\ref{figure: Fried}. 
\end{proof}

\begin{figure}[ht]
\centering
\begin{picture}(40,50)(0,0)
\put(0,0){\includegraphics[width=.3\textwidth]{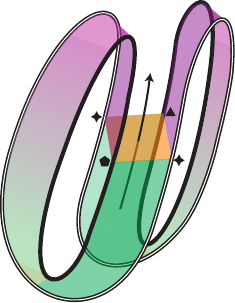}}
\put(6,30){$R_1$}
\put(37,30){$R_2$}
\put(22,24){$P$}
\put(23,38){$k_0$}
\end{picture}
\caption{The union of the rectangles $R_1$ and $R_2$ (whose color change from purple to green along the flow) with the parallellogram~$P$ (orange) which lies in~$D$ yields a topological pair of pants. It can be smoothed into a surface transverse to the Reeb flow, bounded by the periodic orbits~$k_a, k_b$ and~$k_{ab}$, and whose interior intersects~$k_0$. 
}
\label{figure: Fried3}
\end{figure}

\begin{conj}\label{conj-obd} Every Reeb vector field has a Birkhoff section.
\end{conj}

It follows from the previous considerations that if a strongly nondegenerate Reeb vector field has no homoclinic orbit, then any of its supporting broken book decomposition is in fact a rational open book, providing a proof of Theorem~\ref{thm: homoclinic}. Moreover, if a strongly nondegenerate Reeb vector field has at most one periodic orbit having a heteroclinic cycle then, by Lemma~\ref{lemma: hyperbolic}, it has a supporting broken book with at most one broken binding component, and by Theorem~\ref{Fried}, a Birkhoff section.

\subsection{Openness of the various properties}\label{ssec-improving}
In this last section, we discuss the nondegeneracy hypothesis and observe that our Theorems \ref{thm: main}, \ref{thm: infinite}, \ref{thm: entropy} and \ref{thm: homoclinic} hold for an open set of contact forms containing nondegenerate ones or strongly nondegenerate ones, accordingly.

Indeed, the arguments only use the fact that the periodic orbits in the binding are nondegenerate  and do not care about nondegeneracy of ``long" periodic orbits. The binding orbits are a subset of the set of orbits $\mathcal{P}$ for a nondegenerate contact form $\lambda$ that shows up in Lemma \ref{lemma: main}.
The set $\mathcal{P}$ contains all periodic orbits of $R_\lambda$ whose actions are less than $\mathcal{A} (\Gamma)$, where~$\Gamma$ is a representative of a homology class whose image by the $U$-map does not vanish. 
As we will see below, the action of an orbit set realizing a class which is not in the kernel of $U$ can be {\it a priori} bounded in a neighborhood of~$\lambda$, a manifestation of the continuity of the spectral invariants.

Precisely, there exists a $C^2$-neighborhood $N(\lambda)$ of $\lambda$ such that for every nondegenerate contact form $\lambda'$ in $N(\lambda)$,
there is a representative $\Gamma'$ for $\lambda'$ of a nonzero class in $ECH(M,\lambda')$ whose image under the $U$-map is nonzero and whose total action is {\it a priori} bounded by some $L>0$, depending only on $N(\lambda)$.
This is a consequence of the existence of the cobordism maps in $ECH$ (defined through Seiberg-Witten homology) given in
\cite[Theorem 2.3]{Hu2}.
We now shrink $N(\lambda)$ so that moreover for every form $\lambda'$ in~$N(\lambda)$, the periodic Reeb orbits of $\lambda'$ of action less than $L$, forming a set~$\mathcal{P}'$, are all nondegenerate -- note this is an open condition.
Now, if $\lambda'$ is a contact form in $N(\lambda)$, possibly degenerate, first its periodic  Reeb orbits of action less than $L$ are nondegenerate and second it can be approximated by a sequence of nondegenerate contact forms $\lambda'_n$ in $N(\lambda)$ whose periodic orbits of actions less than $L$ coincide with those of $\lambda'$. 
Since $\lambda_n'$ is nondegenerate, the conclusion of Lemma \ref{lemma: main} holds for $\lambda_n'$ and $\mathcal{P'}$: through every point~$z$ in $M$, one can find a projected holomorphic curve through $z$ and with asymptotics in $\mathcal{P'}$. 
By compactness for pseudo-holomorphic curves in the ECH context \cite[Sections 3.8 and 5.3]{Hu}, this property also holds for $\lambda'$ and~$\mathcal{P}'$. 
This is all we need to find a supporting broken book for $\lambda'$, with binding in the nondegenerate set $\mathcal{P}'$. 
The rest of the arguments to prove Theorems~\ref{thm: main}, \ref{thm: infinite}, \ref{thm: entropy} and \ref{thm: homoclinic} then carry over to $\lambda'$.

\end{document}